\documentclass[12pt,a4paper,twoside,reqno]{amsart}
\usepackage[british,UKenglish,USenglish,english,american]{babel}

\usepackage{graphicx}
\usepackage{amscd}
\usepackage{amsmath, amssymb}
\usepackage{graphics}
\usepackage{graphicx}
\usepackage{epsfig}
\usepackage{xcolor}
\usepackage{soul}
\usepackage{tikz}
\usepackage{graphicx}
\usepackage{ mathrsfs }
\usepackage{ amssymb }
\usepackage{amscd}
\usepackage{amsthm }
\usepackage[utf8]{inputenc}
\usepackage{dirtytalk}
\usepackage{multirow}
\usepackage[all]{xy}
\usepackage{faktor}

\usepackage{url, enumitem}
\usepackage[noadjust]{cite}

\numberwithin{equation}{section}
\allowdisplaybreaks

\usepackage[pdftex,%
bookmarks=true,
breaklinks=true,
bookmarksnumbered = true,
colorlinks= true,
urlcolor= blue,
anchorcolor = yellow,
citecolor=blue,
]{hyperref}

\usepackage{fancyhdr,lipsum}

\newtheorem{theorem}{Theorem}[section]

\newtheorem{corollary}[theorem]{Corollary}

\newtheorem{definition}[theorem]{Definition}

\newtheorem{proposition}[theorem]{Proposition}
\newtheorem{remark}[theorem]{Remark}

\begin{document}

\title{Multi-welded twin groups}

\author{Mohamad N. Nasser}
\address{Mohamad N. Nasser\\
         Department of Mathematics and Computer Science\\
         Beirut Arab University\\
         P.O. Box 11-5020, Beirut, Lebanon}        
\email{m.nasser@bau.edu.lb}

\author{Oscar Ocampo} 
\address{Oscar Ocampo\\  Departamento de Matem\'atica - IME\\ Universidade Federal da Bahia\\ Av.~Milton Santos~S/N, CEP:~40170-110 - Salvador - BA - Brazil}
\email{oscaro@ufba.br}

\subjclass[2020]{20F05; 20F36}

\keywords{Multi-Virtual Twin Group; Multi-Welded Twin Group, Local Representations; Reducibility; Faithfulness.}

\date{\today}

\maketitle

\begin{abstract}
For $k\geq 1$ and $n\geq 2$, we introduce the multi-welded twin group $M_kWT_n$, a natural welded analogue of the multi-virtual twin group. We show that $M_kWT_n$ arises naturally as a quotient of the universal welded braid group $UW_n(k)$, placing it within the unified framework of universal virtual and welded braid-type groups. We establish natural quotient maps relating $M_kWT_n$ to the multi-virtual twin group $M_kVT_n$, the welded twin group $WT_n$, and the corresponding virtual and welded braid-type groups. 
Several structural properties of $M_kWT_n$ are obtained. In particular, we compute its abelianization, prove that its commutator subgroup is perfect for $n\ge5$, and show that the symmetric group $S_n$ is its smallest non-abelian finite quotient. 
We also investigate the representation theory of $M_kWT_n$. In fact, we classify all non-trivial  complex homogeneous $2$-local representations of $M_kWT_n$, showing that only one family survives under the additional twin and welded relations. Furthermore, we classify all non-trivial complex homogeneous $3$-local representations of $M_2WT_n$. We further investigate the reducibility and faithfulness properties of both the $2$-local and $3$-local representations.
\end{abstract}

\vspace*{0.75cm}

\section{Introduction}

Since the introduction of virtual knot theory by Kauffman \cite{K}, several generalizations of classical braid groups have appeared in the literature, including virtual braid groups, welded braid groups, twin groups, virtual twin groups, welded twin groups, and their associated pure subgroups. These groups form a rich family of braid-type groups with strong connections to low-dimensional topology, combinatorial group theory, and representation theory.\vspace{0.1cm}

Twin groups were introduced as Coxeter-type analogues of braid groups, obtained by replacing the braid relations with involutivity relations. The twin group $T_n$ was originally defined by Voevodsky and Shabat in~\cite{SV}, and later by Khovanov who investigated the group extensively in~\cite{M.Khovanov1996} \cite{MK}, establishing a geometric framework similar to the one associated with the classical braid group $B_n$. Structural properties of twin and pure twin groups were studied by Bardakov, Singh, and Vesnin in \cite{BSV}. More recently, several authors investigated local and linear representations of braid-type and twin-type groups; see for instance \cite{KMPN}, \cite{Nassertwin}, and \cite{Nasser2026}. \vspace{0.1cm}

A new direction in this area was initiated by the introduction of multi-virtual structures. 
Motivated by multi-virtual knot theory \cite{Kau2}, several multi-virtual and multi-welded braid-type groups have recently been introduced and studied. In \cite{KNP}, Keshari, Nasser, and Prabhakar investigated local representations of the multi-virtual braid group $M_kVB_n$ and the multi-welded braid group $M_kWB_n$. Subsequently, Keshari, Mayassi, Prabhakar, and Nasser introduced the multi-virtual twin group $M_kVT_n$ and studied its presentations and local representations in \cite{KMPN}. In parallel, universal virtual braid groups $UV_n(k)$ were introduced by Ocampo in \cite{O2} as a unified framework interpolating between several virtual braid-type groups.\vspace{0.1cm}

The purpose of the present paper is to introduce and study a welded analogue of the multi-virtual twin group. More precisely, we define the \emph{multi-welded twin group} $M_kWT_n$, which consists of one twin family together with $k$ virtual families interacting through mixed virtual and welded relations. Our construction naturally fits into the universal welded framework developed in \cite{NasserOcampo}.\vspace{0.1cm}

One of the main observations of the paper is that $M_kWT_n$ arises naturally as a quotient of the universal welded braid group $UW_n(k)$. More precisely, one distinguished non-virtual family of $UW_n(k)$ becomes the twin family, while the remaining non-virtual families become additional virtual families. This construction also yields natural quotient maps relating $M_kWT_n$ to $M_kVT_n$, $WT_n$, and the corresponding virtual and welded braid-type groups. In particular, we obtain the description
$$
WT_n \cong \faktor{UW_n(1)}{\langle\!\langle \sigma_{i,1}^2\rangle\!\rangle},
$$
which provides a natural welded counterpart of the quotient description of the group $VT_n$ obtained in \cite{O2}.\vspace{0.1cm}

We also establish several structural properties of $M_kWT_n$. In particular, we compute its abelianization, prove that its commutator subgroup is perfect for $n\ge5$, and show that the symmetric group $S_n$ is its smallest non-abelian finite quotient. These results show that several rigidity phenomena from the universal virtual and welded framework survive after passing to the involutive quotient defining $M_kWT_n$.\vspace{0.1cm}

The second part of the paper is devoted to the representation theory of $M_kWT_n$. Using the classification of homogeneous local representations of $UW_n(k)$ obtained in \cite{NasserOcampo}, we classify all non-trivial complex homogeneous $2$-local representations of $M_kWT_n$. Interestingly, among the three families of homogeneous $2$-local representations of $UW_n(k)$, only one survives after imposing the additional twin and welded relations defining $M_kWT_n$. We also classify all non-trivial complex homogeneous $3$-local representations of $M_2WT_n$. In addition, we study the reducibility and faithfulness properties of both the $2$-local and $3$-local representations.\vspace{0.1cm}

The paper is divided into two sections as follows. In Section~\ref{sec:multiwelded}, we introduce the multi-welded twin group $M_kWT_n$, establish its connections with the universal welded braid group $UW_n(k)$, and prove several structural properties. In Section~\ref{sec:reps}, we investigate homogeneous local representations of $M_kWT_n$, including the classification of homogeneous $2$-local and $3$-local representations, together with the study of their main properties, especially faithfulness and reducibility.

\subsection*{Acknowledgments}

The second named author gratefully acknowledges the personal and medical support of Eliane Santos, the staff of HCA, Bruno Noronha, Luciano Macedo, M\'arcio Isabella, Andreia de Oliveira Rocha, Andreia Gracielle Santana, Ednice de Souza Santos, and SMURB--UFBA (Servi\c{c}o M\'edico Universit\'ario Rubens Brasil Soares), whose support since July 2024 was essential in enabling the completion of this work. O.~O.~was partially supported by the National Council for Scientific and Technological Development (CNPq, Brazil) through a \textit{Bolsa de Produtividade} grant No.~305422/2022--7.

\vspace*{0.1 cm}

\section{The Multi-Welded Twin Group $M_kWT_n$}\label{sec:multiwelded}

The group introduced below should be viewed as the natural welded analogue of the multi-virtual twin group introduced in \cite{KMPN}. In contrast with the universal virtual and welded braid groups $UV_n(k)$ \cite{O2} and $UW_n(k)$ \cite{NasserOcampo}, the group $M_kWT_n$ consists of one twin family together with $k$ virtual families. 
The defining relations are obtained from the multi-virtual twin group by adjoining, for each virtual family, the welded relation involving that family and the twin generators.

\begin{definition}\label{def:MWT}
Let $k \ge 1$ and $n \ge 2$. The \emph{multi-welded twin group} $M_kWT_n$ is the group generated by
$$
s_i\;(1 \le i \le n-1) \quad\text{and}\quad \rho_i^{(\alpha)}\;(1 \le i \le n-1,\; 0 \le \alpha \le k-1)
$$
subject to the following relations for $0 \le \alpha,\, \beta \le k-1$:
\begin{align}
\label{eqn:pres_tinv}
&\text{(Twin involutions)} && s_i^2 = 1;\\ 
\label{eqn:pres_tcr}
&\text{(Twin commutation)} && s_i s_j = s_j s_i\;\;(|i-j|\ge 2); \\ 
\label{eqn:pres_coxeter}
&\text{(Virtual Coxeter)} && (\rho_i^{(\alpha)})^2 = 1,\quad \rho_i^{(\alpha)}\rho_{i+1}^{(\alpha)}\rho_i^{(\alpha)} = \rho_{i+1}^{(\alpha)}\rho_i^{(\alpha)}\rho_{i+1}^{(\alpha)},\\ 
\nonumber 
& && \rho_i^{(\alpha)}\rho_j^{(\alpha)} = \rho_j^{(\alpha)}\rho_i^{(\alpha)}\;\;(|i-j|\ge 2);\\ 
\label{eqn:pres_tmc}
&\text{(Mixed commutation)} && \rho_i^{(\alpha)} s_j = s_j \rho_i^{(\alpha)}\;\;(|i-j|\ge 2);\\
\label{eqn:pres_tmr2type}
&\text{(MR2-type)} && \rho_i^{(\alpha)}\rho_{i+1}^{(\alpha)} s_i = s_{i+1} \rho_i^{(\alpha)}\rho_{i+1}^{(\alpha)}\quad (1\le i\le n-2);\\
\label{eqn:pres_tvc}
&\text{(Virtual commutation)} && \rho_i^{(\alpha)}\rho_j^{(\beta)} = \rho_j^{(\beta)}\rho_i^{(\alpha)}\;\;(|i-j|\ge 2,\; \alpha\neq\beta);\\
\label{eqn:pres_tmr1}
&\text{(Virtual mixed relations I)} && \rho_i^{(\alpha)}\rho_{i+1}^{(\beta)}\rho_i^{(\beta)} = \rho_{i+1}^{(\beta)}\rho_i^{(\beta)}\rho_{i+1}^{(\alpha)}\quad (1\le i\le n-2,\; \alpha<\beta);\\
\label{eqn:pres_tmr2}
&\text{(Virtual mixed relations II)} && \rho_i^{(\alpha)}\rho_{i+1}^{(\alpha)}\rho_i^{(\beta)} = \rho_{i+1}^{(\beta)}\rho_i^{(\alpha)}\rho_{i+1}^{(\alpha)}\quad (1\le i\le n-2,\; \alpha<\beta);\\
\label{eqn:pres_twr}
&\text{(Welded relations)} && \rho_i^{(\alpha)} s_{i+1} s_i = s_{i+1} s_i \rho_{i+1}^{(\alpha)}\quad (1\le i\le n-2). 
\end{align}
For $n=2$ the relations with index $i$ are vacuous, and for $n=1$ the group is trivial.
\end{definition}

\begin{remark}
\begin{enumerate}
    \item For $k=1$ we recover the classical welded twin group 
    $$
    M_1WT_n = WT_n,
    $$ 
    where $WT_n$ denotes the welded twin group introduced in \cite[Section~5]{BSV}. 
In particular, the structural results established later in Subsection~\ref{subsec:properties} apply to the classical welded twin group as a special case. 

\item As in the classical welded twin group $WT_n$, the second forbidden relation also holds in $M_kWT_n$. Indeed, taking inverses in the welded relation \eqref{eqn:pres_twr} and using the involutivity of the generators, we obtain
$$
s_i s_{i+1}\rho_i^{(\alpha)} = \rho_{i+1}^{(\alpha)}s_i s_{i+1}.
$$
Thus, for each virtual family $\alpha$, the two twin-welded forbidden relations are equivalent in $M_kWT_n$.
\end{enumerate} 
\end{remark}

There is a natural epimorphism from the universal welded braid group $UW_n(k)$ defined in \cite[Section~5]{NasserOcampo} onto the multi-welded twin group $M_kWT_n$ given in Definition~\ref{def:MWT}. Under this map, the virtual generators $\rho_i$ of $UW_n(k)$ are identified with the distinguished virtual family $\rho_i^{(0)}$, the first $k-1$ non-virtual families $\sigma_{i,t}$ ($1\le t\le k-1$) become the additional virtual families $\rho_i^{(t)}$, and the last non-virtual family $\sigma_{i,k}$ is sent to the twin generators $s_i$. In this sense, the multi-welded twin group is obtained by distinguishing one non-virtual family of $UW_n(k)$ as a twin family while the remaining non-virtual families become virtual ones.  The precise statement is given in the following proposition.

\begin{proposition}\label{prop:UW_MkWT}
For all $k \ge 1$ and $n \ge 2$, there exists a surjective homomorphism
$$
\Psi_{n,k}\colon UW_n(k) \longrightarrow M_kWT_n
$$
defined on the generators of $UW_n(k)$ by
$$
\Psi_{n,k}(\rho_i) = \rho_i^{(0)}\ \text{ and } \  \Psi_{n,k}(\sigma_{i,t}) = 
\begin{cases}
\rho_i^{(t)}, & \text{if } 1 \le t \le k-1,\\[4pt]
s_i, & \text{if } t = k
\end{cases}
$$
for all $i = 1,\dots,n-1$. The kernel of $\Psi_{n,k}$ contains the normal closure of $\{(\sigma_{i,t})^2 : 1\le i\le n-1,\;1\le t\le k\}$.
\end{proposition}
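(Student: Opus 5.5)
The plan is to use von Dyck's theorem: define $\Psi_{n,k}$ on the generators of $UW_n(k)$ as in the statement and check that every defining relation of $UW_n(k)$ (as presented in \cite[Section~5]{NasserOcampo}) maps to a relation that holds in $M_kWT_n$. Surjectivity is then immediate, because every generator $s_i$, $\rho_i^{(0)}$, $\rho_i^{(t)}$ ($1\le t\le k-1$) of $M_kWT_n$ lies in the image. The kernel statement also follows at once, since the images $(\rho_i^{(t)})^2$ and $s_i^2$ are trivial by \eqref{eqn:pres_coxeter} and \eqref{eqn:pres_tinv}. Hence each $\sigma_{i,t}^2$ lies in $\ker\Psi_{n,k}$, and so does its normal closure.

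For the relation check I would sort the relations of $UW_n(k)$ into groups.
\begin{enumerate}
\item The relations $\rho_i^2=1$, the braid relations among the $\rho_i$, and far commutation among the $\rho_i$ map directly to \eqref{eqn:pres_coxeter} for $\alpha=0$.
\item The braid relations $\sigma_{i,t}\sigma_{i+1,t}\sigma_{i,t}=\sigma_{i+1,t}\sigma_{i,t}\sigma_{i+1,t}$ and far commutation within each family $t$. For $t\le k-1$ these go to the Coxeter relations of family $t$. For $t=k$, far commutation goes to \eqref{eqn:pres_tcr}. The braid relation becomes $s_is_{i+1}s_i=s_{i+1}s_is_{i+1}$, which does \emph{not} hold in a twin group. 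So I must check that the presentation of $UW_n(k)$ in \cite{NasserOcampo} imposes no braid relation within a single non-virtual family, only far commutation. I expect this to be the case, as in $UV_n(k)$ of \cite{O2}, where the non-virtual families satisfy only commutation-type relations and mixed relations with $\rho$. If it were otherwise the map could not exist, so this is the point to verify carefully.
\item Commutation relations between distinct families, and between $\rho$ and $\sigma_{\cdot,t}$, for $|i-j|\ge2$. These go to \eqref{eqn:pres_tvc} or to \eqref{eqn:pres_tmc}.
\item The mixed relations $\rho_i\rho_{i+1}\sigma_{i,t}=\sigma_{i+1,t}\rho_i\rho_{i+1}$. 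For $t=k$ these go to \eqref{eqn:pres_tmr2type}. For $t\le k-1$ they go to $\rho_i^{(0)}\rho_{i+1}^{(0)}\rho_i^{(t)}=\rho_{i+1}^{(t)}\rho_i^{(0)}\rho_{i+1}^{(0)}$, which is \eqref{eqn:pres_tmr2} with $\alpha=0<\beta=t$.
\item The welded (forbidden) relations $\rho_i\sigma_{i+1,t}\sigma_{i,t}=\sigma_{i+1,t}\sigma_{i,t}\rho_{i+1}$, together with any welded relations between different non-virtual families.
\end{enumerate}

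The main obstacle is this last group, item 5, together with any mixed relations linking two non-virtual families $t<t'$. The images must be matched against \eqref{eqn:pres_tmr1}, \eqref{eqn:pres_tmr2} and \eqref{eqn:pres_twr}, taking care with the index ordering $\alpha<\beta$ and with the side on which each family appears.

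\begin{itemize}
\item When the pair involves $t=k$, the relation becomes a welded relation in the twin generators. This is \eqref{eqn:pres_twr}, possibly in the equivalent inverted form noted in the preceding Remark, for the appropriate $\alpha$.
\item When the pair consists of virtual-type families $0\le\alpha<\beta\le k-1$, the relation has the shape $\rho^{(\alpha)}_i\rho^{(\beta)}_{i+1}\rho^{(\beta)}_i=\rho^{(\beta)}_{i+1}\rho^{(\beta)}_i\rho^{(\alpha)}_{i+1}$. This is exactly \eqref{eqn:pres_tmr1}.
\item If some relation appears with the roles of the families reversed, I would invert both sides using involutivity, as in the Remark, and rearrange until it coincides with \eqref{eqn:pres_tmr1} or \eqref{eqn:pres_tmr2}.
\end{itemize}

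Once every relation is verified, von Dyck's theorem gives the homomorphism $\Psi_{n,k}$, and the proof is complete.
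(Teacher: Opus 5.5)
Your proposal is correct and takes essentially the same route as the paper. The paper checks the presentation of $UW_n(k)$ relation by relation: (MR2) goes to \eqref{eqn:pres_tmr2type} or to \eqref{eqn:pres_tmr2} with $\alpha=0$, and the welded relation (WR1) goes to \eqref{eqn:pres_twr} or to \eqref{eqn:pres_tmr1} with $\alpha=0$, $\beta=t$; it then argues surjectivity and the kernel statement exactly as you do. Both of your hedges resolve the way you expected: the only relations among the $\sigma_{i,t}$ in $UW_n(k)$ are the far commutations (CR), with no braid relation within a family, and (WR1) involves a single family $t$ together with the $\rho_i$, so no cross-family welded cases arise.
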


\begin{proof}
We verify that the images of the generators satisfy the defining relations given in the presentation of $UW_n(k)$ \cite[Definitions~1 and~14]{NasserOcampo}.

First, the relations involving only the virtual generators $\rho_i$, namely (PR1)--(PR3), are preserved because
$$
\Psi_{n,k}(\rho_i)=\rho_i^{(0)}
$$
and the family $\{\rho_i^{(0)}\}_{i=1}^{n-1}$ satisfies the Coxeter relations \eqref{eqn:pres_coxeter} in $M_kWT_n$.

Next, consider the commutation relation (CR). Let $|i-j|\geq 2$ and $t,\ell\in\{1,\ldots,k\}$. If $t,\ell<k$, then 
$$
\Psi_{n,k}(\sigma_{i,t})=\rho_i^{(t)}\ \text{ and }
\ \Psi_{n,k}(\sigma_{j,\ell})=\rho_j^{(\ell)},
$$
and these elements commute by the virtual commutation relations \eqref{eqn:pres_tvc}. If $t=\ell=k$, then the images are $s_i$ and $s_j$, which commute by the twin commutation relations \eqref{eqn:pres_tcr}. Finally, if exactly one of $t,\ell$ is equal to $k$, then the images are one twin generator and one virtual generator with distant indices, and they commute by the mixed commutation relations \eqref{eqn:pres_tmc}. Hence (CR) is preserved. 

The verification of (MR1) is analogous. If $|i-j|\geq 2$, then $\Psi_{n,k}(\rho_j)=\rho_j^{(0)}$. For $t<k$, the elements $\rho_i^{(t)}$ and $\rho_j^{(0)}$ commute by the virtual commutation relations \eqref{eqn:pres_tvc}, whereas for $t=k$, the elements $s_i$ and $\rho_j^{(0)}$ commute by the mixed commutation relations \eqref{eqn:pres_tmc}. Therefore (MR1) is preserved.

It remains to check the two relations involving adjacent indices. We first consider (MR2). For $1\leq i\leq n-2$ and $1\leq t\leq k$, we have 
$$
\Psi_{n,k}(\rho_i\rho_{i+1}\sigma_{i,t}) = \rho_i^{(0)}\rho_{i+1}^{(0)}\Psi_{n,k}(\sigma_{i,t}),
$$
whereas
$$
\Psi_{n,k}(\sigma_{i+1,t}\rho_i\rho_{i+1}) = \Psi_{n,k}(\sigma_{i+1,t})\rho_i^{(0)}\rho_{i+1}^{(0)}.
$$
If $t<k$, this becomes
$$
\rho_i^{(0)}\rho_{i+1}^{(0)}\rho_i^{(t)} = \rho_{i+1}^{(t)}\rho_i^{(0)}\rho_{i+1}^{(0)},
$$
which is one of the virtual mixed relations \eqref{eqn:pres_tmr2} in Definition~\ref{def:MWT}. If $t=k$, it becomes 
$$
\rho_i^{(0)}\rho_{i+1}^{(0)}s_i = s_{i+1}\rho_i^{(0)}\rho_{i+1}^{(0)},
$$
which is precisely the MR2-type relation \eqref{eqn:pres_tmr2type}. Thus (MR2) is preserved.

We now check the welded relation (WR1) of $UW_n(k)$. For $1\leq i\leq n-2$ and $1\leq t\leq k$, its image under $\Psi_{n,k}$ is the equality 
$$
\rho_i^{(0)}\Psi_{n,k}(\sigma_{i+1,t})\Psi_{n,k}(\sigma_{i,t}) = \Psi_{n,k}(\sigma_{i+1,t})\Psi_{n,k}(\sigma_{i,t})\rho_{i+1}^{(0)}.
$$
If $t<k$, this becomes
$$
\rho_i^{(0)}\rho_{i+1}^{(t)}\rho_i^{(t)} = \rho_{i+1}^{(t)}\rho_i^{(t)}\rho_{i+1}^{(0)},
$$
which is one of the virtual mixed relations \eqref{eqn:pres_tmr1} in Definition~\ref{def:MWT}. If $t=k$, it becomes 
$$
\rho_i^{(0)}s_{i+1}s_i = s_{i+1}s_i\rho_{i+1}^{(0)},
$$
which is exactly the welded relation \eqref{eqn:pres_twr} defining $M_kWT_n$. Hence (WR1) is also preserved.

Therefore $\Psi_{n,k}$ is a well-defined homomorphism. It is surjective because all generators of $M_kWT_n$ lie in its image:
$$
s_i=\Psi_{n,k}(\sigma_{i,k}),\qquad
\rho_i^{(0)}=\Psi_{n,k}(\rho_i),\ \text{ and } \ 
\rho_i^{(t)}=\Psi_{n,k}(\sigma_{i,t})
\quad (1\leq t\leq k-1).
$$

Finally, we prove the statement about the kernel. For $t=k$,
$$
\Psi_{n,k}(\sigma_{i,k}^2)=s_i^2=1,
$$
and for $1\leq t\leq k-1$,
$$
\Psi_{n,k}(\sigma_{i,t}^2)=(\rho_i^{(t)})^2=1.
$$
Thus $(\sigma_{i,t})^2\in\ker(\Psi_{n,k})$ for every $i$ and $t$. Since $\ker(\Psi_{n,k})$ is normal, it contains the normal closure 
$$
\left\langle\!\left\langle
(\sigma_{i,t})^2
\mid
1\leq i\leq n-1,\ 1\leq t\leq k
\right\rangle\!\right\rangle .
$$
\end{proof}

Proposition~\ref{prop:UW_MkWT} shows that the multi-welded twin group $M_kWT_n$ is naturally controlled by the universal welded braid group $UW_n(k)$. However, unlike the case $k=1$, where no additional virtual families occur, the quotient is not obtained merely by imposing involutivity on the non-virtual generators. The additional multi-virtual relations in Definition~\ref{def:MWT} play an essential role in encoding the interaction between the different virtual families.

An analogous construction can be performed at the level of the universal virtual braid group $UV_n(k)$. In this case, the resulting quotient is naturally related to the multi-virtual twin group $M_kVT_n$, defined in \cite[Section~3]{KMPN}. However, in contrast with the welded setting, the quotient by the squares of the generators is not sufficient in general to recover all defining relations of $M_kVT_n$.

\begin{proposition}\label{prop:UV_MkVT}
For all $k \ge 1$ and $n \ge 2$, there exists a surjective homomorphism $\Phi_{n,k}\colon UV_n(k) \longrightarrow M_kVT_n$ defined on the generators of $UV_n(k)$ by
$$
\Phi_{n,k}(\rho_i) = \rho_i^{(0)} \ \text{ and } \ 
\Phi_{n,k}(\sigma_{i,t}) = 
\begin{cases}
\rho_i^{(t)}, & \text{if } 1 \le t \le k-1,\\[4pt]
s_i, & \text{if } t = k
\end{cases}
$$
for all $i = 1,\dots,n-1$. The kernel of $\Phi_{n,k}$ contains the normal closure of $\{(\sigma_{i,t})^2 : 1\le i\le n-1,\;1\le t\le k\}$. 
\end{proposition}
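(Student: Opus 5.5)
The argument follows the proof of Proposition~\ref{prop:UW_MkWT}. We check that the proposed images of the generators of $UV_n(k)$ satisfy every defining relation of $UV_n(k)$ from \cite{O2}. Those relations are:
\begin{itemize}
\item the Coxeter relations (PR1)--(PR3) among the $\rho_i$;
\item the far commutation relations (CR) among the $\sigma_{i,t}$;
\item the mixed far commutation (MR1) between $\rho_j$ and $\sigma_{i,t}$;
\item the mixed relation (MR2), $\rho_i\rho_{i+1}\sigma_{i,t}=\sigma_{i+1,t}\rho_i\rho_{i+1}$.
\end{itemize}
The welded relation (WR1) is absent, so there is one fewer family to check than in the welded case. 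We use the presentation of $M_kVT_n$ from \cite[Section~3]{KMPN}. It has the same shape as Definition~\ref{def:MWT} with the welded relations \eqref{eqn:pres_twr} removed. It contains the twin relations, the Coxeter relations for each virtual family, the mixed commutation relations, the MR2-type relation, virtual commutation, and the two virtual mixed relations.

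We go through the cases in order.
\begin{itemize}
\item (PR1)--(PR3) map to the Coxeter relations of the family $\rho^{(0)}$.
\item (CR) splits into three subcases: both $t,\ell<k$ (virtual commutation), $t=\ell=k$ (twin commutation), and exactly one of them equal to $k$ (mixed commutation).
\item (MR1) maps to virtual commutation of $\rho^{(t)}$ with $\rho^{(0)}$ when $t<k$, and to mixed commutation of $s_i$ with $\rho_j^{(0)}$ when $t=k$.
\item (MR2) becomes $\rho_i^{(0)}\rho_{i+1}^{(0)}\rho_i^{(t)}=\rho_{i+1}^{(t)}\rho_i^{(0)}\rho_{i+1}^{(0)}$ for $t<k$. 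This is the virtual mixed relation II with $\alpha=0<\beta=t$. For $t=k$ it becomes the MR2-type relation for $s_i$.
\end{itemize}
So $\Phi_{n,k}$ is a well-defined homomorphism. It is surjective because every generator $s_i$ and $\rho_i^{(\alpha)}$ of $M_kVT_n$ is the image of $\sigma_{i,k}$, $\rho_i$ or $\sigma_{i,\alpha}$.

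For the kernel statement, $\Phi_{n,k}(\sigma_{i,t}^2)$ equals $(\rho_i^{(t)})^2=1$ or $s_i^2=1$. Hence each $\sigma_{i,t}^2$ lies in the kernel, and since the kernel is normal it contains the normal closure of these squares.

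The main obstacle is bookkeeping. We must confirm that the presentation of $M_kVT_n$ in \cite{KMPN} contains the needed relations exactly in the form used, in particular the direction and index conventions of virtual mixed relation II and of the MR2-type relation. If its conventions differ, for instance $\beta$ playing the role of the first family, we would adjust the verification by inverting the relation or reindexing, using the involutivity of all the generators.
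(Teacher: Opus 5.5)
Your proposal is correct and follows the paper's proof essentially step by step. It uses the same relation-by-relation check of (PR1)--(PR3), (CR), (MR1) and (MR2), the same surjectivity argument, and the same observation that each $\sigma_{i,t}^2$ maps to $1$, so their normal closure lies in the kernel. The paper settles the bookkeeping concern you raise by citing relations (18) and (19) of \cite{KMPN} with $\alpha=0$, $\beta=t$, which are exactly the forms you used.
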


\begin{proof}
We verify that the images of the generators satisfy the defining relations of
$UV_n(k)$.

The relations (PR1)--(PR3) are preserved because $\Phi_{n,k}(\rho_i)=\rho_i^{(0)}$ and the family $\{\rho_i^{(0)}\}_{i=1}^{n-1}$ satisfies the Coxeter relations in $M_kVT_n$.

We next check the commutation relation (CR). Let $|i-j|\geq 2$ and $t,\ell\in\{1,\ldots,k\}$. If $t,\ell<k$, then the images are $\rho_i^{(t)}$ and $\rho_j^{(\ell)}$, which commute by the virtual commutation relations in $M_kVT_n$. If $t=\ell=k$, then the images are $s_i$ and $s_j$, which commute by the twin commutation relations. Finally, if exactly one of $t,\ell$ is equal to $k$, then the images are one twin generator and one virtual generator with distant indices, and they commute by the mixed commutation relations. Hence (CR) is preserved.

The relation (MR1) is verified in the same way. Indeed, for $|i-j|\geq 2$, the element $\Phi_{n,k}(\rho_j)$ is $\rho_j^{(0)}$. If $t<k$, then $\Phi_{n,k}(\sigma_{i,t})=\rho_i^{(t)}$, and the required commutation follows from the virtual commutation relations. If $t=k$, then $\Phi_{n,k}(\sigma_{i,k})=s_i$, and the required commutation follows from the mixed commutation relations.

It remains to check (MR2). For $1\leq i\leq n-2$ and $1\leq t\leq k$, we have
$$
\Phi_{n,k}(\rho_i\rho_{i+1}\sigma_{i,t}) = \rho_i^{(0)}\rho_{i+1}^{(0)}\Phi_{n,k}(\sigma_{i,t}),
$$
while
$$
\Phi_{n,k}(\sigma_{i+1,t}\rho_i\rho_{i+1}) = \Phi_{n,k}(\sigma_{i+1,t})\rho_i^{(0)}\rho_{i+1}^{(0)}.
$$
If $t<k$, this becomes
$$
\rho_i^{(0)}\rho_{i+1}^{(0)}\rho_i^{(t)} = \rho_{i+1}^{(t)}\rho_i^{(0)}\rho_{i+1}^{(0)},
$$
which is precisely relation~(18) in \cite{KMPN}, with $\alpha=0$ and $\beta=t$. If $t=k$, it becomes
$$
\rho_i^{(0)}\rho_{i+1}^{(0)}s_i = s_{i+1}\rho_i^{(0)}\rho_{i+1}^{(0)},
$$
which is relation~(19) in \cite{KMPN}, again with $\alpha=0$. Thus (MR2) is preserved.

Therefore $\Phi_{n,k}$ is a well-defined homomorphism. It is surjective because all generators of $M_kVT_n$ lie in its image:
$$
s_i=\Phi_{n,k}(\sigma_{i,k}),\qquad \rho_i^{(0)}=\Phi_{n,k}(\rho_i), \ \text{ and } \ 
\rho_i^{(t)}=\Phi_{n,k}(\sigma_{i,t}) \quad (1\leq t\leq k-1).
$$

Finally, we prove the statement about the kernel. If $t=k$, then
$$
\Phi_{n,k}(\sigma_{i,k}^2)=s_i^2=1
$$
by the twin involution relation in $M_kVT_n$. If $1\leq t\leq k-1$, then
$$
\Phi_{n,k}(\sigma_{i,t}^2)=(\rho_i^{(t)})^2=1
$$
by the virtual involution relations. Hence $(\sigma_{i,t})^2\in\ker(\Phi_{n,k})$ for every $i$ and $t$. Since $\ker(\Phi_{n,k})$ is normal, it contains the normal closure
$$
\left\langle\!\left\langle
(\sigma_{i,t})^2
\mid
1\leq i\leq n-1,\ 1\leq t\leq k
\right\rangle\!\right\rangle .
$$
Consequently, $\Phi_{n,k}$ factors through the quotient by these squares, yielding the induced surjective homomorphism $\overline{\Phi}_{n,k}$.
\end{proof}

We notice that $\Phi_{n,k}$ of Proposition~\ref{prop:UV_MkVT} induces a surjective homomorphism
$$
\overline{\Phi}_{n,k}\colon \faktor{UV_n(k)}{\langle\!\langle (\sigma_{i,t})^2 \rangle\!\rangle} \longrightarrow M_kVT_n.
$$
In general this quotient is not an isomorphism, since further relations are needed to recover the full set of defining relations of $M_kVT_n$.

\begin{corollary}\label{cor:MWT_quotient}
Let $n\geq 3$ and $k\geq 1$. 
The group $M_kWT_n$ is a quotient of $M_kVT_n$. More precisely, $M_kWT_n \cong
\faktor{M_kVT_n} {N}$, where 
$$
N = \left\langle\!\left\langle \rho_i^{(\alpha)}s_{i+1}s_i (s_{i+1}s_i\rho_{i+1}^{(\alpha)})^{-1} \;\middle|\; 1\le i\le n-2,\;0\le \alpha\le k-1 \right\rangle\!\right\rangle.
$$ 
\end{corollary}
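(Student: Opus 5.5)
The plan is to compare presentations directly, using von Dyck's theorem in both directions. I will take the presentation of $M_kVT_n$ from \cite[Section~3]{KMPN}. The first task is to check that its defining relations are exactly relations \eqref{eqn:pres_tinv}--\eqref{eqn:pres_tmr2} of Definition~\ref{def:MWT}, written in the same generators $s_i$ and $\rho_i^{(\alpha)}$. These are the twin involutions and commutations, the Coxeter relations for each virtual family, the mixed commutations, the MR2-type relations, the virtual commutations, and the two families of virtual mixed relations, which are relations (18) and (19) of \cite{KMPN} among others. By construction, Definition~\ref{def:MWT} is then the presentation of $M_kVT_n$ with the welded relations \eqref{eqn:pres_twr} adjoined, for $1\le i\le n-2$ and $0\le\alpha\le k-1$. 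The hypothesis $n\ge 3$ is what makes this family nonempty.

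Next, I define $f\colon M_kVT_n\to M_kWT_n$ on generators by $s_i\mapsto s_i$ and $\rho_i^{(\alpha)}\mapsto\rho_i^{(\alpha)}$. Every defining relation of $M_kVT_n$ holds in $M_kWT_n$, so $f$ is a well-defined homomorphism, and it is surjective since it hits every generator. Each normal generator $\rho_i^{(\alpha)}s_{i+1}s_i(s_{i+1}s_i\rho_{i+1}^{(\alpha)})^{-1}$ of $N$ maps to $1$ by \eqref{eqn:pres_twr}. Because the kernel is normal, this gives $N\subseteq\ker f$, so $f$ induces a surjection $\bar f\colon M_kVT_n/N\to M_kWT_n$.

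For the inverse, I define $g\colon M_kWT_n\to M_kVT_n/N$ by sending each generator to the coset of the generator with the same name. Relations \eqref{eqn:pres_tinv}--\eqref{eqn:pres_tmr2} already hold in $M_kVT_n$, hence in the quotient. Relation \eqref{eqn:pres_twr} holds in the quotient because the corresponding element lies in $N$. So $g$ is well defined. Both $g\circ\bar f$ and $\bar f\circ g$ fix generators, so they are identities and $\bar f$ is an isomorphism. Equivalently, this is the standard fact that adjoining relations to a presentation produces the quotient by their normal closure.

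The only real point of care is the first step. I must confirm that the relation list of $M_kVT_n$ in \cite{KMPN} coincides exactly with \eqref{eqn:pres_tinv}--\eqref{eqn:pres_tmr2}, including the index ranges and the convention $\alpha<\beta$ in the mixed relations. If the lists differ only by relations equivalent modulo the involutions, such as inverted forms, I would show they are interderivable and then proceed as above.
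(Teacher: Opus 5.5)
Your proposal is correct and is essentially the paper's proof. The paper likewise notes that Definition~\ref{def:MWT} is the presentation of $M_kVT_n$ with the welded relations \eqref{eqn:pres_twr} adjoined, lets the identity on generators induce the surjection $M_kVT_n/N\to M_kWT_n$, and concludes because the quotient's presentation is exactly that of Definition~\ref{def:MWT}; your explicit inverse $g$ just spells this out, and the paper also takes for granted the identification of the $M_kVT_n$ relations with \eqref{eqn:pres_tinv}--\eqref{eqn:pres_tmr2} that you flag (one small slip: by the paper's usage, relation~(19) of \cite{KMPN} is the MR2-type relation \eqref{eqn:pres_tmr2type}, not a virtual mixed relation).
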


\begin{proof}
The presentation of $M_kWT_n$ contains all defining relations of $M_kVT_n$, together with the additional welded relations
$$
\rho_i^{(\alpha)}s_{i+1}s_i = s_{i+1}s_i\rho_{i+1}^{(\alpha)}
$$
for $1\le i\le n-2$ and $0\le \alpha\le k-1$.  Hence the identity map on the generators of $M_kVT_n$ induces a surjective homomorphism from the displayed quotient onto $M_kWT_n$.  Conversely, the defining presentation of the quotient is exactly the presentation of Definition~\ref{def:MWT}.  Therefore the two groups are isomorphic.
\end{proof}

The following commutative diagram summarizes the relationships among the various virtual and welded braid-type and twin-type groups, the latters discussed in this paper. All arrows are surjective homomorphisms, extending the unified framework established in \cite{NasserOcampo} and \cite{O2}.
$$ 
    \xymatrix@C=2.0pc{
 & UV_n(k) \ar@{->>}[ld] \ar@{->>}[d] \ar@{->>}[rd] \ar@{->>}[rrr] & & & UV_n(1) \ar@{->>}[ld] \ar@{->>}[d] \ar@{->>}[rd]  & \\
M_kVB_n \ar@{->>}[d] & M_kVT_n \ar@{->>}[d] & UW_n(k) \ar@{->>}[lld] \ar@{->>}[ld] \ar@{->>}[r] & UW_n(1) \ar@{->>}[rd] \ar@{->>}[rrd] & VB_n \ar@{->>}[d] & VT_n \ar@{->>}[d] \\
M_kWB_n & M_kWT_n & & & WB_n & WT_n
}
$$

The top row consists of universal virtual braid groups \cite{O2}. The left column corresponds to multi-version groups (with $k$ families of virtual generators), while the right column corresponds to the classical case $k=1$. Moving downwards, the arrows are natural quotient projections that correspond to imposing additional relations.  
For instance, the quotient map from $UW_n(k)$ to $M_kWT_n$ is the one described in Proposition~\ref{prop:UW_MkWT}; it imposes the involutivity of the images of the non-virtual generators together with the additional multi-virtual relations encoded in Definition~\ref{def:MWT}. 
The existence of all these quotients follows from \cite[Proposition~2.2]{O2} (for the virtual braid-type groups), from Proposition~\ref{prop:UW_MkWT} (for $M_kWT_n$), and from the corresponding results in \cite{NasserOcampo} (for the welded braid-type groups).

The case $k=1$ deserves special attention. It recovers the usual welded twin group and shows that $WT_n$ itself arises naturally from the universal welded framework by imposing the Coxeter-type involutivity relation on the single family of non-virtual generators. 
For this case, the map $UW_n(1)\twoheadrightarrow M_1WT_n$ sends $\rho_i$ to $\rho_i^{(0)}$ and $\sigma_{i,1}$ to $s_i$. Since there are no additional virtual families when $k=1$, the only extra relation imposed on the non-virtual family is the involutivity relation $\sigma_{i,1}^2=1$.  Hence $M_1WT_n$ recovers the usual welded twin group $WT_n$.

\begin{remark}\label{rem:WT_UW}
In the special case $k=1$, Corollary~\ref{cor:MWT_quotient} recovers the usual welded twin group $WT_n$. Equivalently, the natural quotient map $UW_n(1)\twoheadrightarrow WT_n$ is obtained by imposing $\sigma_{i,1}^2=1$ on the unique non-virtual family. 
Thus
$$
WT_n \cong \faktor{UW_n(1)}{\langle\!\langle \sigma_{i,1}^2\rangle\!\rangle}.
$$
This complements the description of $VT_n$ as a quotient of $UV_n(1)$ given in \cite[Proposition~2.2(iii)]{O2}.
\end{remark}

This observation complements the classical quotient description
$$
UV_n(1)\twoheadrightarrow VT_n\twoheadrightarrow WT_n.
$$
Indeed, the universal welded group $UW_n(1)$ provides a direct intermediate framework in which the welded twin group is obtained simply by imposing the involutivity of the unique non-virtual family. This gives a uniform interpretation of the virtual and welded twin groups within the universal framework.

\subsection{Structural Properties Inherited From $UW_n(k)$}\label{subsec:properties}

We now record some structural consequences of the quotient maps constructed above.  Although $M_kWT_n$ is not obtained from $UW_n(k)$ merely by imposing the square relations $(\sigma_{i,t})^2=1$ when $k>1$, it is still a quotient of $UV_n(k)$.  Therefore several properties controlled by finite quotients and commutator subgroups can be transferred from $UV_n(k)$.  The abelianization, on the other hand, is computed directly from the presentation of Definition~\ref{def:MWT}.

\begin{theorem}\label{thm:MWT_properties}
Let $k\ge 1$. Then:
\begin{enumerate}
\item[(i)] For every $n\ge 2$, the abelianization is
$$
(M_kWT_n)^{\mathrm{ab}} \cong \mathbb{Z}_2^{\,k+1}.
$$

\item[(ii)] If $n \ge 5$, then the commutator subgroup $(M_kWT_n)'$ is perfect.

\item[(iii)] If $n \ge 5$, then the symmetric group $S_n$ is the smallest non-abelian finite quotient of $M_kWT_n$. More precisely, every non-abelian finite quotient of $M_kWT_n$ has order at least $n!$, and equality occurs only for $S_n$.

\item[(iv)] For every $n\ge 2$, the group $M_kWT_n$ admits a finite quotient whose order is at least $2^k n!$, hence strictly larger than $n!$.
\end{enumerate}
\end{theorem}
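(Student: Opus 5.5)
For (i), I would first find an upper bound from the presentation. Every generator of $M_kWT_n$ is an involution, so in $(M_kWT_n)^{\mathrm{ab}}$ each generator has order dividing $2$. For $n\ge 3$ the relations collapse each family to a single class:
\begin{itemize}
\item the virtual braid relation in \eqref{eqn:pres_coxeter} gives $\rho_i^{(\alpha)}=\rho_{i+1}^{(\alpha)}$;
\item the MR2-type relation \eqref{eqn:pres_tmr2type} gives $s_i=s_{i+1}$;
\item the relations \eqref{eqn:pres_tmr1}, \eqref{eqn:pres_tmr2} and \eqref{eqn:pres_twr} give nothing beyond this.
\end{itemize}
Hence the abelianization is generated by $k+1$ classes $\bar s,\bar\rho^{(0)},\dots,\bar\rho^{(k-1)}$ of order at most $2$.

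For the lower bound I would define a homomorphism $M_kWT_n\to\mathbb{Z}_2^{k+1}$. It sends every $s_i$ to the basis vector $e_s$ and every $\rho_i^{(\alpha)}$ to $e_\alpha$. To check it is well defined, note that in every defining relation each family occurs the same number of times modulo $2$ on both sides. This map is onto, so $(M_kWT_n)^{\mathrm{ab}}\cong\mathbb{Z}_2^{k+1}$. For $n=2$ the group is a free product of $k+1$ copies of $\mathbb{Z}_2$, and the same conclusion is immediate.

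For (ii) and the lower bound in (iii), I would transfer known facts through the surjection $\Psi_{n,k}\colon UW_n(k)\to M_kWT_n$ of Proposition~\ref{prop:UW_MkWT}. One could equally compose $UV_n(k)\twoheadrightarrow UW_n(k)$ with $\Psi_{n,k}$. Two facts drive the transfer.
\begin{itemize}
\item If $f\colon G\to H$ is surjective, then $f(G')=H'$, and a homomorphic image of a perfect group is perfect. So (ii) follows from the perfectness of $UV_n(k)'$ (or $UW_n(k)'$) for $n\ge5$, which I take from \cite{O2} and \cite{NasserOcampo}.
\item Every finite quotient of $M_kWT_n$ is a finite quotient of $UV_n(k)$. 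So the statement that non-abelian finite quotients have order at least $n!$, with equality only for $S_n$, is inherited from the corresponding result for $UV_n(k)$ in \cite{O2}.
\end{itemize}
It then remains to show that $S_n$ really is a quotient. I would send every $s_i$ and every $\rho_i^{(\alpha)}$ to the transposition $\tau_i=(i\ i{+}1)$. Each defining relation then becomes either a Coxeter relation of $S_n$ or the identity $\tau_i\tau_{i+1}\tau_i=\tau_{i+1}\tau_i\tau_{i+1}$. For example, the welded relation becomes $\tau_i\tau_{i+1}\tau_i=\tau_{i+1}\tau_i\tau_{i+1}$.

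For (iv), I would combine the two maps above into a single homomorphism $M_kWT_n\to S_n\times\mathbb{Z}_2^{k+1}$, and bound the order of its image $Q$ from below.
\begin{itemize}
\item The projection of $Q$ onto $S_n$ is surjective.
\item The elements $s_1\rho_1^{(\alpha)}$ map to $(1,e_s+e_\alpha)$ for $0\le\alpha\le k-1$. These $k$ vectors are linearly independent, so they generate a subgroup of $\{1\}\times\mathbb{Z}_2^{k+1}$ of order $2^k$.
\item This subgroup is exactly the kernel of the projection $Q\to S_n$, because the sign of the $S_n$-component equals the coordinate sum mod $2$.
\end{itemize}
Therefore $|Q|\ge 2^k\,n!$, which is strictly larger than $n!$ since $k\ge1$.

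The main obstacle is (ii) and (iii). Everything there rests on citing the precise statements for $UV_n(k)$ or $UW_n(k)$, and on checking that $M_kWT_n$ is indeed a quotient of whichever group carries those results. The direct computations in (i) and (iv) are routine.
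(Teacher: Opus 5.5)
Your proposal is correct and follows the paper's proof essentially step for step. Both arguments compute the abelianization directly and exhibit the explicit map to $\mathbb{Z}_2^{k+1}$; both transfer the perfectness of $UV_n(k)'$ and the $n!$ lower bound on non-abelian finite quotients through $UV_n(k)\twoheadrightarrow UW_n(k)\twoheadrightarrow M_kWT_n$, use the map sending every generator to $(i\ i{+}1)$, and use the map into $S_n\times\mathbb{Z}_2^{k+1}$ for (iv), where your sign-parity observation additionally shows the image has order exactly $2^k n!$.
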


\begin{proof}
We use the presentation of $M_kWT_n$ given in Definition~\ref{def:MWT}, with generators
$$
s_i \quad (1\le i\le n-1) \ \text{ and } \ 
\rho_i^{(\alpha)} \quad (1\le i\le n-1,\;0\le \alpha\le k-1).
$$

\begin{enumerate}
\item[(i)] In the abelianization all generators commute. Each generator is an involution, hence has order dividing $2$. The mixed relation 
$$
\rho_i^{(\alpha)}\rho_{i+1}^{(\alpha)} s_i = s_{i+1} \rho_i^{(\alpha)}\rho_{i+1}^{(\alpha)}
$$
implies, in the abelianization, $s_i = s_{i+1}$ for all $i$ (since the $\rho$ terms cancel because they are involutions and commute in the abelianization). Thus all $s_i$ have the same class; denote it by $\bar s$.

For the virtual generators, the commutation relations $\rho_i^{(\alpha)}\rho_j^{(\beta)} = \rho_j^{(\beta)}\rho_i^{(\alpha)}$ for $|i-j|\ge 2$ together with the Coxeter relations imply that, in the abelianization, all $\rho_i^{(\alpha)}$ with the same superscript $\alpha$ are identified, but different superscripts are not identified with each other (no relation forces $\rho_i^{(\alpha)} = \rho_i^{(\beta)}$ for $\alpha \neq \beta$). Hence, for each $\alpha = 0,\dots,k-1$, we obtain a distinct class $\bar\rho^{(\alpha)}$ of order $2$.

Thus $(M_kWT_n)^{\mathrm{ab}}$ is generated by $\bar s, \bar\rho^{(0)}, \dots, \bar\rho^{(k-1)}$, all of order $2$, with no further relations. The map $M_kWT_n \to \mathbb{Z}_2^{\,k+1}$ defined by
$$
s_i \mapsto e_0 \ \text{ and } \ 
\rho_i^{(\alpha)} \mapsto e_{\alpha+1}
$$
(where $e_0,\dots,e_k$ are the standard basis vectors) satisfies all defining relations and is surjective. Hence $(M_kWT_n)^{\mathrm{ab}} \cong \mathbb{Z}_2^{\,k+1}$.

\item[(ii)] Since $M_kWT_n$ is a quotient of $UW_n(k)$ (Proposition~\ref{prop:UW_MkWT}), and $UW_n(k)$ is a quotient of $UV_n(k)$, there exists a surjective homomorphism $UV_n(k) \twoheadrightarrow M_kWT_n$. For any surjective homomorphism $f\colon G\twoheadrightarrow H$, one has $f(G') = H'$. Hence the map induces a surjection $(UV_n(k))' \twoheadrightarrow (M_kWT_n)'$. By \cite[Proposition~3.2]{O2}, the commutator subgroup $(UV_n(k))'$ is perfect for $n\ge 5$. Since a homomorphic image of a perfect group is perfect, it follows that $(M_kWT_n)'$ is perfect.

\item[(iii)] Let $\psi\colon M_kWT_n \twoheadrightarrow G$ be a finite non-abelian quotient. Composing with the natural projection $UV_n(k) \twoheadrightarrow M_kWT_n$ yields a finite non-abelian quotient of $UV_n(k)$. By \cite[Theorem~5.5]{O2}, every finite non-abelian quotient of $UV_n(k)$ has order at least $n!$. Hence $|G| \ge n!$. Moreover, if $|G| = n!$, then the same result implies that $G \cong S_n$, up to the exceptional outer automorphism when $n=6$. Thus $S_n$ is the smallest non-abelian finite quotient of $M_kWT_n$.

To see that $S_n$ is indeed a quotient, define $\pi\colon M_kWT_n \to S_n$ by
$$
\pi(\rho_i^{(\alpha)}) = (i\ i+1) \ \text{ and } \ 
\pi(s_i) = (i\ i+1)
$$
for all $i$ and $\alpha$. The defining relations of $M_kWT_n$ are sent to valid relations in $S_n$: the twin and virtual Coxeter relations follow from the standard Coxeter presentation of $S_n$, and the mixed relations (MR2-type and welded) become consequences of the braid relation
$$
(i\ i+1)(i+1\ i+2)(i\ i+1) = (i+1\ i+2)(i\ i+1)(i+1\ i+2).
$$
Hence $\pi$ is a well-defined surjection.

\item[(iv)] Let $A = \mathbb{Z}_2^{\,k+1}$ with basis $e_0, e_1, \dots, e_k$. Consider the finite group $S_n \times A$. Define a homomorphism $\Theta\colon M_kWT_n \to S_n \times A$ on generators by
$$
\Theta(s_i) = ((i\ i+1), e_0) \ \text{ and } \ 
\Theta(\rho_i^{(\alpha)}) = ((i\ i+1), e_{\alpha+1})
$$
for all $i$ and $\alpha = 0,\dots,k-1$. The defining relations of $M_kWT_n$ are satisfied: their first components hold in $S_n$ (as in item (iii)), and their second components hold because $A$ is elementary abelian of exponent $2$.

The image of $\Theta$ contains $((i\ i+1), e_0)$ and $((i\ i+1), e_{\alpha+1})$ for each $\alpha$. Hence it contains
$$
(1, e_0 + e_{\alpha+1}) = ((i\ i+1), e_0)\,((i\ i+1), e_{\alpha+1})
$$
for each $\alpha = 0,\dots,k-1$. Therefore the image contains the subgroup
$$
\langle e_0 + e_1, \dots, e_0 + e_k \rangle \cong \mathbb{Z}_2^k
$$
in the second factor (note that $e_0 + e_{\alpha+1}$ for $\alpha = 0,\dots,k-1$ gives exactly $k$ independent elements). The image also projects onto $S_n$ via the first factor. Consequently, the image is a finite group of order at least $2^k n!$. Thus $M_kWT_n$ admits a finite quotient whose order is at least $2^k n!$, which is strictly larger than $n!$ since $k\ge 1$.
\end{enumerate}
\end{proof}

\begin{remark}
The results of this section show that the multi-welded twin group fits naturally into the universal framework developed in \cite{NasserOcampo} and \cite{O2}. The explicit description $WT_n \cong \faktor{UW_n(1)}{\langle\!\langle \sigma_{i,1}^2\rangle\!\rangle}$ (Remark~\ref{rem:WT_UW}) is, to our knowledge, new and illustrates the unifying power of the universal welded braid group.
\end{remark}

\section{Connection With Representation Theory}\label{sec:reps}

The structural description above also suggests natural questions in representation theory.  Since $M_kWT_n$ is a quotient of $UW_n(k)$, every representation of $M_kWT_n$ gives, by composition, a representation of $UW_n(k)$ satisfying the relations imposed in Definition~\ref{def:MWT}.  Thus one possible strategy is to start from representations of $UW_n(k)$ and impose the additional involutive and multi-virtual twin-type relations required for them to factor through $M_kWT_n$. 
This provides a bridge between the representation theory of universal welded braid groups and the representation theory of multi-virtual twin groups.

The classification of complex homogeneous $2$-local representations of $UW_n(k)$ was obtained in \cite[Theorem~26, Lemma~27]{NasserOcampo}.  A natural next problem is to determine which of those representations factor through $M_kWT_n$.  This requires imposing the relations
$$
s_i^2=1\ \text{ and } \
(\rho_i^{(\alpha)})^2=1,
$$
together with the mixed multi-virtual and welded relations appearing in Definition~\ref{def:MWT}.   This comparison should produce a representation theory that is closely related to, but more restrictive than, the one obtained for $M_kVT_n$ in \cite[Theorem~4.1]{KMPN}. First, we recall the results of all classifications of complex homogeneous $2$-local representations of $UW_n(k)$ for $k\geq 1$ and $n\geq 3$.

\begin{theorem} \cite[Theorem~26, Lemma~27]{NasserOcampo} \label{localUW}
Let $\omega: UW_n(k) \longrightarrow \mathrm{GL}_n(\mathbb{C})$ be a non-trivial homogeneous $2$-local representation. Then, up to equivalence of representations, $\omega$ is equivalent to one of the following three representations, denoted by $\omega_j$ for $1\leq j \leq 3$, explicitly determined as follows.
\begin{enumerate}
\item  The matrices of the representation $\omega_1$ are given by
    \[
\omega_1(\rho_i)=
\begin{pmatrix}
I_{i-1} & 0 & 0 \\
0 & \begin{pmatrix} 0 & 1 \\ 1 & 0 \end{pmatrix} & 0 \\
0 & 0 & I_{n-i-1}
\end{pmatrix}
\text{ \ and \ }
\omega_1(\sigma_{i,t})=
\begin{pmatrix}
I_{i-1} & 0 & 0 \\
0 & \begin{pmatrix} 0 & \dfrac{s_{2,t}}{r_2} \\ 
r_2 s_{3,t} & 0\end{pmatrix} & 0 \\
0 & 0 & I_{n-i-1}
\end{pmatrix}
\]
for all $1 \leq i \leq n-1$ and $1 \leq t \leq k$, where $r_2, s_{2,t}, s_{3,t} \in \mathbb{C}^*$.\vspace{0.1cm}

\item  The matrices of the representation $\omega_2$ are given by
    \[
\omega_2(\rho_i)=
\begin{pmatrix}
I_{i-1} & 0 & 0 \\
0 & \begin{pmatrix} 0 & 1 \\ 1 & 0 \end{pmatrix} & 0 \\
0 & 0 & I_{n-i-1}
\end{pmatrix}
\text{ \ and \ }
\omega_2(\sigma_{i,t})=
\begin{pmatrix}
I_{i-1} & 0 & 0 \\
0 & \begin{pmatrix} 0 & \dfrac{s_{2,t}}{r_2} \\ 
1 & s_{4,t}\end{pmatrix} & 0 \\
0 & 0 & I_{n-i-1}
\end{pmatrix}
\]
for all $1 \leq i \leq n-1$ and $1 \leq t \leq k$, where $r_2,s_{2,t}, s_{4,t} \in \mathbb{C}^*$.\vspace{0.1cm}

\item  The matrices of the representation $\omega_3$ are given by
    \[
\omega_3(\rho_i)=
\begin{pmatrix}
I_{i-1} & 0 & 0 \\
0 & \begin{pmatrix} 0 & 1 \\ 1 & 0 \end{pmatrix} & 0 \\
0 & 0 & I_{n-i-1}
\end{pmatrix}
\text{ \ and \ }
\omega_3(\sigma_{i,t})=
\begin{pmatrix}
I_{i-1} & 0 & 0 \\
0 & \begin{pmatrix} s_{1,t} & \dfrac{s_{2,t}}{r_2} \\ 
1 & 0\end{pmatrix} & 0 \\
0 & 0 & I_{n-i-1}
\end{pmatrix}
\]
for all $1 \leq i \leq n-1$ and $1 \leq t \leq k$, where $r_2,s_{1,t}, s_{2,t} \in \mathbb{C}^*$.
\end{enumerate}
\end{theorem}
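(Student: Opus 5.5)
The plan is a direct computation: write down the most general homogeneous $2$-local matrices, impose the defining relations of $UW_n(k)$ one family at a time, and solve the resulting polynomial system up to conjugation. Write
$$
\omega(\rho_i)=I_{i-1}\oplus\begin{pmatrix} r_1 & r_2\\ r_3 & r_4\end{pmatrix}\oplus I_{n-i-1}
\quad\text{and}\quad
\omega(\sigma_{i,t})=I_{i-1}\oplus\begin{pmatrix} s_{1,t} & s_{2,t}\\ s_{3,t} & s_{4,t}\end{pmatrix}\oplus I_{n-i-1},
$$
with the same $2\times 2$ blocks for every $i$ (homogeneity) and nonzero determinants. Since $\omega$ is $2$-local, the far-commutation relations (CR) and (MR1) hold automatically: the blocks occupy disjoint coordinates when $|i-j|\ge 2$. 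So only (PR1)--(PR3), (MR2) and (WR1) impose conditions, and all of them can be checked on $3\times 3$ matrices, i.e.\ for $n=3$ in coordinates $i,i+1,i+2$.

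\textbf{Step 1: the virtual family.} Impose $\rho_i^2=1$ and $\rho_1\rho_2\rho_1=\rho_2\rho_1\rho_2$ on the $3\times 3$ blocks. I expect the solutions to be the trivial block $I_2$ (and possibly degenerate diagonal ones) or the off-diagonal block $\begin{pmatrix}0&r_2\\ r_2^{-1}&0\end{pmatrix}$. In the degenerate cases, (MR2) and (WR1) should either force $\omega$ to be trivial or give something excluded by the non-triviality and equivalence conventions of the statement; this has to be checked.

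In the main case, conjugate by $D=\mathrm{diag}(1,r_2,r_2^2,\dots,r_2^{n-1})$. This turns the $\rho$-block into $\begin{pmatrix}0&1\\1&0\end{pmatrix}$ and rescales each $\sigma$-block to $\begin{pmatrix} s_{1,t}& s_{2,t}/r_2\\ r_2 s_{3,t}& s_{4,t}\end{pmatrix}$. This explains the shape of the matrices in the statement.

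\textbf{Step 2: (MR2) and (WR1) for each $t$.} Each $t$ interacts only with the fixed $\rho$ and never with another index $t'$, since (CR) is automatic. So the analysis is done one $t$ at a time, and the parameters for different $t$ are independent.

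For (MR2), $\rho_i\rho_{i+1}\sigma_{i,t}=\sigma_{i+1,t}\rho_i\rho_{i+1}$: because $\rho_i\rho_{i+1}$ is a permutation matrix, this is a conjugation condition. Comparing entries of the $3\times 3$ products should yield only mild constraints, possibly none beyond homogeneity.

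For (WR1), $\rho_i\sigma_{i+1,t}\sigma_{i,t}=\sigma_{i+1,t}\sigma_{i,t}\rho_{i+1}$: expand both sides as $3\times 3$ matrices and equate the nine entries. I expect equations of the form
$$
s_{1,t}s_{4,t}=0,\qquad s_{1,t}(\,r_2s_{3,t}-1)=0,\qquad s_{4,t}(\,r_2s_{3,t}-1)=0,
$$
together with some consistency equations, all subject to $\det\neq 0$, i.e.\ $s_{1,t}s_{4,t}\neq s_{2,t}s_{3,t}$.

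\textbf{Step 3: case split.} Split according to which of $s_{1,t}$ and $s_{4,t}$ vanish.
\begin{itemize}
\item If $s_{1,t}=s_{4,t}=0$, the block is anti-diagonal; this gives $\omega_1$.
\item If $s_{1,t}=0$ and $s_{4,t}\ne 0$, the equations force $r_2s_{3,t}=1$, so the lower-left entry is $1$; this gives $\omega_2$.
\item If $s_{4,t}=0$ and $s_{1,t}\ne 0$, symmetrically the lower-left entry is $1$; this gives $\omega_3$.
\item The case $s_{1,t}s_{4,t}\neq 0$ should be excluded by the first equation.
\end{itemize}
The invertibility condition then requires $s_{2,t}\ne 0$, and $s_{3,t}\ne 0$ in the anti-diagonal case. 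One also has to make sure that a single representation is not a mixture of types across different $t$; the statement implicitly asserts this, so it must either follow from the computation or be justified separately.

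\textbf{Main obstacle.} The hard part is the exhaustive case analysis of the polynomial system coming from (WR1) combined with Step 1. The goal is to ensure that no branch is missed: in particular the degenerate $\rho$-solutions, and any hidden dependence between different $t$ through the shared parameter $r_2$. I would first write out the full $3\times 3$ entry comparison for (WR1) symbolically and extract the products that must vanish, then use the determinant condition to prune the branches.
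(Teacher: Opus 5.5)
The paper gives no proof of this statement; it only cites it. Your computation is correct, but your final step cannot be carried out, because for $k\ge 2$ it is false.

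\textbf{What your computation establishes.} The degenerate $\rho$-cases you left open do die:
\begin{itemize}
\item If $\operatorname{tr}R\neq 0$, then $R^2=I$ forces $R=\pm I$, and $R=-I$ violates $\rho_1\rho_2\rho_1=\rho_2\rho_1\rho_2$.
\item If $\operatorname{tr}R=0$, the $(2,2)$ entries of the braid relation give $r_1^3=-r_1^3$. So $r_1=0$, $r_2r_3=1$, and $R$ is anti-diagonal.
\item If $R=I$, then (MR2) becomes $S_t\oplus 1=1\oplus S_t$, which forces $S_t=I$, so $\omega$ is trivial.
\end{itemize}
After your diagonal normalization, $\rho_1\rho_2$ is the cyclic permutation matrix and $P(S\oplus 1)P^{-1}=1\oplus S$, so (MR2) holds automatically. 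Write the normalized block as $\begin{pmatrix}a&b\\c&d\end{pmatrix}$, with $c=r_2s_{3,t}$. Then (WR1) is exactly $ad=0$, $a(c-1)=0$, $d(c-1)=0$, as you predicted, and invertibility gives $bc\neq 0$. This produces precisely the three shapes, separately for each $t$.

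\textbf{The failing step.} You deferred showing that one representation cannot mix types across different $t$. As you observed yourself, the only relation of $UW_n(k)$ linking two families is (CR), which is automatic for $2$-local matrices, so the type is chosen independently for each $t$.

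For example, take $k=2$ and let $\omega(\rho_i)$ be the swap. Let the block of $\sigma_{i,1}$ be $\begin{pmatrix}0&b_1\\c_1&0\end{pmatrix}$ and the block of $\sigma_{i,2}$ be $\begin{pmatrix}0&b_2\\1&d_2\end{pmatrix}$, with $b_1c_1b_2d_2\neq 0$. All defining relations hold. This representation is not equivalent to any $\omega_j$, because traces are conjugation invariants:
\begin{itemize}
\item $\operatorname{tr}\omega(\sigma_{i,1})=n-2$. In $\omega_2$ and $\omega_3$, every $\operatorname{tr}\omega_j(\sigma_{i,t})$ equals $n-2+s_{4,t}$ or $n-2+s_{1,t}$ with a nonzero parameter, so these are ruled out.
\item $\operatorname{tr}\omega(\sigma_{i,2})=n-2+d_2\neq n-2$, which rules out $\omega_1$.
\end{itemize}

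\textbf{What you should conclude instead.} Your argument proves the family-wise classification, which is the true statement. Up to equivalence, $\omega(\rho_i)$ is the swap, and for each $t$ independently the block of $\omega(\sigma_{i,t})$ has one of the three listed shapes. The version with a single $j$ for all $t$ holds only for $k=1$, or if the statement is read family-wise. State that conclusion rather than trying to prove uniformity.

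The paper's later use in Theorem~\ref{thmlocal} still works under this reading. There the involutivity conditions are imposed on each $\sigma_{i,t}$ separately, and they rule out the $\omega_2$ and $\omega_3$ shapes block by block.
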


We now determine all non-trivial complex homogeneous $2$-local representations of $M_kWT_n$ for all $k\geq 1$ and $n\geq 3$.

\begin{theorem} \label{thmlocal}
Let $\delta: M_kWT_n \longrightarrow \mathrm{GL}_n(\mathbb{C})$ be a non-trivial homogeneous $2$-local representation. Then, up to equivalence of representations, $\delta$ is equivalent to the following representation, denoted by $\delta_1$, explicitly determined as follows.
    \[
\delta_1(\rho_i^{(0)})=
\begin{pmatrix}
I_{i-1} & 0 & 0 \\
0 & \begin{pmatrix} 0 & 1 \\ 1 & 0 \end{pmatrix} & 0 \\
0 & 0 & I_{n-i-1}
\end{pmatrix},\]
$$\delta_1(\rho_i^{(t)})=
\begin{pmatrix}
I_{i-1} & 0 & 0 \\
0 & \begin{pmatrix} 0 & \dfrac{s_{2,t}}{r_2} \\ 
\dfrac{r_2}{s_{2,t}} & 0\end{pmatrix} & 0 \\
0 & 0 & I_{n-i-1}
\end{pmatrix},
$$
and
$$\delta_1(s_i)=
\begin{pmatrix}
I_{i-1} & 0 & 0 \\
0 & \begin{pmatrix} 0 & \dfrac{s}{r_2} \\ 
\dfrac{r_2}{s} & 0\end{pmatrix} & 0 \\
0 & 0 & I_{n-i-1}
\end{pmatrix}
$$
for all $1 \leq i \leq n-1$ and $1 \leq t \leq k-1$, where $r_2, s_{2,t},s \in \mathbb{C}^*$.
\end{theorem}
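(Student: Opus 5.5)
The plan is to pull back $\delta$ along the surjection $\Psi_{n,k}$ of Proposition~\ref{prop:UW_MkWT}. The composite $\omega=\delta\circ\Psi_{n,k}\colon UW_n(k)\to \mathrm{GL}_n(\mathbb{C})$ is again a homogeneous $2$-local representation. It is non-trivial because $\Psi_{n,k}$ is surjective and $\delta$ is non-trivial. By Theorem~\ref{localUW}, after conjugating by a fixed matrix, $\omega$ is one of $\omega_1,\omega_2,\omega_3$.

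Since $\Psi_{n,k}$ sends $\rho_i\mapsto\rho_i^{(0)}$, $\sigma_{i,t}\mapsto \rho_i^{(t)}$ for $t<k$, and $\sigma_{i,k}\mapsto s_i$, the matrices of $\delta$ are then read off directly. We get $\delta(\rho_i^{(0)})=\omega_j(\rho_i)$, $\delta(\rho_i^{(t)})=\omega_j(\sigma_{i,t})$, and $\delta(s_i)=\omega_j(\sigma_{i,k})$. It therefore remains to impose those relations of Definition~\ref{def:MWT} that are not automatic consequences of the $UW_n(k)$ relations.

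The first such relation is the involutivity $\delta(\rho_i^{(t)})^2=\delta(s_i)^2=1$, which is a $2\times 2$ computation on the middle block.

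\begin{itemize}
\item For $\omega_2$, the square of $\begin{pmatrix}0&a\\1&s_4\end{pmatrix}$ has lower-right entry $a+s_4^2$ and off-diagonal entry $s_4\neq0$. Since $s_{4,t}\in\mathbb{C}^*$, this family is excluded.
\item For $\omega_3$, the square of $\begin{pmatrix}s_1&a\\1&0\end{pmatrix}$ has lower-left entry $s_{1,t}\neq 0$, so $\omega_3$ is excluded as well.
\item For $\omega_1$, squaring $\begin{pmatrix}0&s_{2,t}/r_2\\ r_2s_{3,t}&0\end{pmatrix}$ gives $s_{2,t}s_{3,t}I_2$. Involutivity thus forces $s_{3,t}=1/s_{2,t}$, so the lower-left entry becomes $r_2/s_{2,t}$, matching $\delta_1$. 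Writing $s=s_{2,k}$ gives the stated form of $\delta_1(s_i)$.
\end{itemize}

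The second step is to verify that the resulting $\delta_1$ satisfies all the remaining relations. This covers the twin commutation, the mixed commutation, and the full Virtual Coxeter relations for each family $\alpha\ge1$. It also covers the Virtual mixed relations I and II for arbitrary pairs $\alpha<\beta$, not only $\alpha=0$, together with the MR2-type and welded relations.

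I expect this verification to be the main obstacle, or rather the step needing care. The $UW_n(k)$ relations only give the mixed relations with $\alpha=0$, so the relations \eqref{eqn:pres_tmr1}, \eqref{eqn:pres_tmr2} with $1\le\alpha<\beta$, and the braid relation within each family $\rho^{(t)}$, must be checked by hand. My approach is to observe that each matrix $\delta_1(\rho_i^{(t)})$ is $D_t\,\omega_1(\rho_i)\,D_t^{-1}$ for the diagonal matrix $D_t=\mathrm{diag}(1,c_t,c_t^2,\dots,c_t^{n-1})$ with $c_t=r_2/s_{2,t}$. In other words, each family is a diagonal-twisted permutation representation, which can be seen via a monomial matrix computation. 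The braid relation within a family then follows by conjugation.

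For the mixed words, both sides are monomial matrices with the same underlying permutation: the Coxeter-relation permutation identity in $S_n$, as in the proof of Theorem~\ref{thm:MWT_properties}(iii). I would compare the scalar entries directly on the $3\times3$ block indexed by $i,i+1,i+2$. Each nonzero entry is a product of powers of the $c$'s, and for the relations listed the exponents should balance because each strand's weight depends only on the index difference.

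If some relation fails to balance for general parameters, it would impose a constraint among $s_{2,t}$ and $s$. The statement claims that no such constraint arises, so I would double-check the welded relation $\rho_i^{(\alpha)}s_{i+1}s_i=s_{i+1}s_i\rho_{i+1}^{(\alpha)}$ explicitly on the $3\times3$ block first. Conversely, once $\delta_1$ satisfies all relations it is a genuine homogeneous $2$-local representation, and it is non-trivial, completing the classification up to equivalence.
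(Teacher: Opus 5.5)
Your proposal is correct and follows the same route as the paper. You pull $\delta$ back along $\Psi_{n,k}$ to a non-trivial homogeneous $2$-local representation of $UW_n(k)$ and invoke Theorem~\ref{localUW}. Involutivity then excludes $\omega_2,\omega_3$ and forces $s_{3,t}=1/s_{2,t}$ in $\omega_1$, and the remaining relations are checked.

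Your diagonal-conjugation and monomial check makes explicit the ``straightforward matrix computations'' that the paper only asserts, and it does go through. Writing $X_i(c)$ for the local block $\begin{pmatrix}0&c^{-1}\\ c&0\end{pmatrix}$, one has $X_i(a)X_{i+1}(b)X_i(c)=X_{i+1}(c)X_i(b)X_{i+1}(a)$ for all $a,b,c\in\mathbb{C}^*$. This identity covers the braid, MR2-type, mixed I/II and welded relations for every choice of families, including the MR2-type relations with $\alpha\ge1$, which the paper's list of relations to check omits.
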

\begin{proof}
To establish this result, we first consider the representation $\omega$ introduced in Theorem \ref{localUW}. By Proposition \ref{prop:UW_MkWT}, the representation $\delta$ is defined by
\[
\delta(\rho_i^{(0)})=\omega(\rho_i),
\]
\[
\delta(\rho_i^{(t)})=\omega(\sigma_{i,t}), \quad 1\leq t\leq k-1,
\]
and
\[
\delta(s_i)=\omega(\sigma_{i,k}),
\]
subject to additional relations of $M_kWT_n$. Hence, we impose the required relations on each of the three possible forms of $\omega$ given in Theorem \ref{localUW}, namely $\omega_j$ for $1\leq j\leq 3$. The relations that must be considered here are  \eqref{eqn:pres_tinv}, \eqref{eqn:pres_coxeter}, \eqref{eqn:pres_tvc}, \eqref{eqn:pres_tmr1}, \eqref{eqn:pres_tmr2}, and \eqref{eqn:pres_twr}. We now consider the possible cases of $\omega$ separately.
\begin{enumerate}
    \item Suppose that $\omega$ is of the form $\omega_1$. Imposing relations \eqref{eqn:pres_tinv} and \eqref{eqn:pres_coxeter} gives
    \[
\delta(\rho_i^{(t)})^2=\delta(s_i)^2=I_n, \quad 1\leq t\leq k.
    \]
    Equivalently,
    \[
    \omega_1(\sigma_{i,t})^2=I_n, \quad 1\leq t\leq k.
    \]
    A direct computation shows that this condition is equivalent to
    \[
    s_{2,t}=\frac{1}{s_{3,t}}, \quad 1\leq t\leq k.
    \]
    Moreover, straightforward matrix computations show that relations {\eqref{eqn:pres_tvc}, \eqref{eqn:pres_tmr1}, \eqref{eqn:pres_tmr2}, and \eqref{eqn:pres_twr} } are automatically satisfied under these conditions. Therefore, in this case, the representation $\delta$ is equivalent to $\delta_1$, as required.

    \item Now suppose that $\omega$ is of the form $\omega_2$. The case where $\omega$ is of the form $\omega_3$ is completely analogous. As in the previous case, imposing relations \eqref{eqn:pres_tinv} and \eqref{eqn:pres_coxeter} yields
    \[
    \delta(\rho_i^{(t)})^2=\delta(s_i)^2=I_n, \quad 1\leq t\leq k.
    \]
    Equivalently,
    \[
    \omega_2(\sigma_{i,t})^2=I_n, \quad 1\leq t\leq k.
    \]
    Direct computations show that this condition is equivalent to
    \[
    s_{2,t}=r_2
    \quad \text{and} \quad
    s_{4,t}=0,
    \quad 1\leq t\leq k.
    \]
    However, this contradicts the fact that $s_{4,t}\in \mathbb{C}^*$. Hence, the representations $\omega_2$ and $\omega_3$ cannot induce representations of $M_kWT_n$.
\end{enumerate}

Therefore, the representation $\delta$ admits only one possible form, namely $\delta_1$, as required.
\end{proof}

We now state a result regarding the reducibility of these classified representations.

\begin{proposition}
    Consider the representation $\delta_1: M_kWT_n \longrightarrow \mathrm{GL}_n(\mathbb{C})$ determined in Theorem \ref{thmlocal}. Then $\delta_1$ is reducible if and only if $r_2=s_{2,t}=s$ for all $1\leq t \leq k-1$.
\end{proposition}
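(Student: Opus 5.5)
The plan is to exploit the fact that every matrix in the image of $\delta_1$ is a monomial matrix, so that invariant subspaces can be read off from weight spaces of a diagonal subgroup. Put $a_t=s_{2,t}/r_2$ for $1\le t\le k-1$ and $b=s/r_2$. Then $\delta_1(\rho_i^{(t)})$ and $\delta_1(s_i)$ act on the $2\times 2$ block in positions $i,i+1$ by
$$
\begin{pmatrix} 0 & c\\ c^{-1} & 0\end{pmatrix}, \qquad c\in\{a_t,\,b\},
$$
while $\delta_1(\rho_i^{(0)})$ is the same with $c=1$. The condition $r_2=s_{2,t}=s$ for all $t$ is exactly $a_t=b=1$ for all $t$.

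\emph{Sufficiency.} If all parameters equal $1$, then every generator acts by the permutation matrix of $(i\ i+1)$. Hence the vector $(1,1,\dots,1)^T$ is fixed by all generators, and $\delta_1$ is reducible.

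\emph{Necessity.} Suppose some parameter $c\in\{a_1,\dots,a_{k-1},b\}$ satisfies $c\neq 1$, and let $\tau_i$ be the generator carrying that parameter. Consider
$$
D_i=\delta_1(\rho_i^{(0)})\,\delta_1(\tau_i)=\mathrm{diag}(1,\dots,1,c^{-1},c,1,\dots,1),
$$
where $c^{-1}$ and $c$ sit in positions $i$ and $i+1$. Let $H$ be the diagonal subgroup generated by $D_1,\dots,D_{n-1}$, and let $\chi_j$ denote the character of $H$ given by the $j$-th diagonal entry. The key step is to show that $\chi_1,\dots,\chi_n$ are pairwise distinct; here $n\ge3$ is needed. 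Take $j<l$.
\begin{itemize}
\item If $l>j+1$, then $D_j$ gives $\chi_j(D_j)=c^{-1}\neq 1=\chi_l(D_j)$.
\item If $l=j+1$ and $c^2\neq1$, then $D_j$ gives $c^{-1}\neq c$.
\item If $l=j+1$ and $c=-1$, we use a neighbouring element, which exists because $n\ge3$. Either $D_{j+1}$ (if $j+1\le n-1$) gives $\chi_l=-1$ and $\chi_j=1$, or $D_{j-1}$ gives $\chi_j=-1$ and $\chi_l=1$.
\end{itemize}

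Now let $W$ be a nonzero invariant subspace. It is $H$-invariant, and since the $\chi_j$ are distinct it is a direct sum of coordinate lines $\mathbb{C}e_j$. Each $\delta_1(\rho_i^{(0)})$ sends $e_i$ to $e_{i+1}$, so the set of coordinate lines contained in $W$ is stable under the transitive group $S_n$. Hence $W=\mathbb{C}^n$, and $\delta_1$ is irreducible.

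The main obstacle is the degenerate case $c=-1$, where $D_i$ acts as a scalar on its $2\times2$ block. This is handled by the neighbouring-index argument above. One should also double-check that no diagonal conjugation can normalise $c$ to $1$; this is consistent, because conjugating by $\mathrm{diag}(d_j)$ must preserve $\delta_1(\rho_i^{(0)})$, which forces all $d_j$ to be equal.
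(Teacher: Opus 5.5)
Your proof is correct, but it follows a different route from the paper. The paper does not argue directly. It invokes the general criterion \cite[Theorem~9]{NasserOcampo}: a representation generated by the permutation matrices of $(i\ i+1)$ together with $2$-local matrices with blocks $\begin{pmatrix} a_t & b_t\\ c_t & d_t\end{pmatrix}$ is reducible if and only if either $a_t+b_t=c_t+d_t=1$ for all $t$, or $a_t+c_t=b_t+d_t=1$ for all $t$. The paper then substitutes $a_t=d_t=0$ and $b_t=c_t^{-1}=s_{2,t}/r_2$ (resp.\ $s/r_2$).

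You instead exploit the monomial structure specific to $\delta_1$:
\begin{enumerate}
\item the products $\delta_1(\rho_i^{(0)})\delta_1(\tau_i)$ are diagonal;
\item any invariant subspace therefore splits into weight spaces of the diagonal subgroup $H$, which are coordinate lines once the characters are separated;
\item transitivity of the permutation action of the $\rho_i^{(0)}$ then finishes the argument.
\end{enumerate}

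The paper's route is a one-line substitution, but it rests on a black-box result whose hypotheses (notably $n\ge 3$) are not restated. Your argument is self-contained and shows exactly where $n\ge 3$ enters, namely the case $c=-1$. This is not a technicality. For $n=2$, $k=1$ and $s=-r_2$, we have $\delta_1(s_1)=-\delta_1(\rho_1^{(0)})$, so the two generators commute and share the eigenvector $(1,1)^T$. The representation is then reducible although $s\neq r_2$. Your neighbouring-index step is precisely what rules this out for $n\ge 3$.

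Your closing remark about diagonal conjugation is harmless but unnecessary. You work directly with the given matrices, and reducibility is invariant under equivalence anyway.
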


\begin{proof}
By \cite[Theorem 9]{NasserOcampo}, a representation whose matrices are of the form
\[
\begin{pmatrix}
I_{i-1} & 0 & 0 \\
0 & \begin{pmatrix} 0 & 1 \\ 1 & 0 \end{pmatrix} & 0 \\
0 & 0 & I_{n-i-1}
\end{pmatrix}
\]
together with matrices of the form
\[
\begin{pmatrix}
I_{i-1} & 0 & 0 \\
0 & \begin{pmatrix} a_t & b_t \\ c_t & d_t \end{pmatrix} & 0 \\
0 & 0 & I_{n-i-1}
\end{pmatrix}
\]
is reducible if and only if either
\[
a_t+b_t=1 \quad \text{and} \quad c_t+d_t=1
\]
for all \(t\), or
\[
a_t+c_t=1 \quad \text{and} \quad b_t+d_t=1
\]
for all \(t\). Applying this criterion to our representation yields the desired conclusion.
\end{proof}

We now state a result regarding the faithfulness of these classified representations.

\begin{proposition}
The representation $\delta_1: M_kWT_n \longrightarrow \mathrm{GL}_n(\mathbb{C})$ determined in Theorem \ref{thmlocal} is unfaithful.
\end{proposition}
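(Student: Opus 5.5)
The plan is to find an explicit non-trivial element of $M_kWT_n$ (for $n\ge 3$) that $\delta_1$ sends to the identity. The key observation is that each $2\times 2$ block of $\delta_1$ is a permutation matrix conjugated by a diagonal matrix. Indeed, for $D=\mathrm{diag}(d_1,\dots,d_n)$ and $P_i=\delta_1(\rho_i^{(0)})$, the matrix $DP_iD^{-1}$ has block $\begin{pmatrix}0 & d_i/d_{i+1}\\ d_{i+1}/d_i & 0\end{pmatrix}$. So with $d_j=(r_2/s)^{j}$ we get
$$\delta_1(s_i)=D\,P_i\,D^{-1}\quad\text{for all } i.$$
Similarly, $\delta_1(\rho_i^{(t)})=D_tP_iD_t^{-1}$ with $D_t=\mathrm{diag}\big((r_2/s_{2,t})^{j}\big)$. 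Hence the whole image of $\delta_1$ lies in the monomial group $(\mathbb{C}^*)^n\rtimes S_n$. Moreover, each of the families $\{\delta_1(s_i)\}$ and $\{\delta_1(\rho_i^{(t)})\}$ satisfies the Coxeter relations of $S_n$.

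In particular, $\delta_1(s_1)\delta_1(s_2)\delta_1(s_1)=\delta_1(s_2)\delta_1(s_1)\delta_1(s_2)$, so
$$w=(s_1s_2)^3\in\ker\delta_1 .$$
A second candidate uses the fact that $\delta_1(s_i\rho_i^{(0)})$ is diagonal: the commutator $[s_1\rho_1^{(0)},\,s_2\rho_2^{(0)}]$, or the commutator of $s_1\rho_1^{(0)}$ with a conjugate of itself, also lies in $\ker\delta_1$, since diagonal matrices commute.

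It remains to show that such an element is non-trivial in $M_kWT_n$. I will do this by mapping $M_kWT_n$ onto a group where non-triviality is visible. The first step is the homomorphism $M_kWT_n\to WT_n$ sending every $\rho_i^{(\alpha)}$ to $\rho_i$ and $s_i\mapsto s_i$. The virtual mixed relations \eqref{eqn:pres_tmr1}--\eqref{eqn:pres_tmr2} become the braid relations of the single virtual family, and every other relation maps to a relation of $WT_n$, so this is well defined. It then suffices to show that $(s_1s_2)^3\neq 1$ in $WT_n$, i.e. that $s_1s_2$ does not have order dividing $3$ there.

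For this I would use the splitting $WT_n\cong PWT_n\rtimes S_n$ of \cite{BSV}. Since $s_i\rho_i$ lies in the pure part and $s_1s_2=(s_1\rho_1)\rho_1\rho_2(\rho_2 s_2)$, the element $(s_1s_2)^3$ lies in $PWT_n$. Its image can be written in the known generators of $PWT_n$, and one checks it is not trivial there. If that computation turns out awkward, I would instead construct a concrete quotient: an action on $\mathbb{Z}^n$ or on a free product in which the $\rho_i$ act by coordinate permutations and the $s_i$ act by affine or free involutions compatible with \eqref{eqn:pres_tmr2type} and \eqref{eqn:pres_twr}, in which $s_1s_2$ has infinite order.

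The main obstacle is exactly this non-triviality step. The computation that the chosen word lies in $\ker\delta_1$ is routine, but certifying that the word is non-trivial requires a faithful enough understanding of $WT_n$ (or of a suitable quotient of $M_kWT_n$). If $(s_1s_2)^3$ turns out to be trivial in $WT_n$, I would fall back to the commutator $[s_1\rho_1^{(0)},s_2\rho_2^{(0)}]$. Its non-triviality should follow from the structure of $PWT_n$ given in \cite{BSV}, where these pure elements are known to generate a non-abelian subgroup.
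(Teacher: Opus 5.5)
Your computations that $(s_1s_2)^3$ and $[s_1\rho_1^{(0)},s_2\rho_2^{(0)}]$ lie in $\ker\delta_1$ are correct, but there is a genuine gap: both elements are trivial in $M_kWT_n$. The non-triviality step therefore cannot be completed for them, whatever quotient you use. The computation below uses only \eqref{eqn:pres_tinv}, \eqref{eqn:pres_coxeter}, \eqref{eqn:pres_tmr2type} and \eqref{eqn:pres_twr} with $\alpha=0$.

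Put $\lambda_{12}=s_1\rho_1^{(0)}$, $\lambda_{23}=s_2\rho_2^{(0)}$, $\lambda_{13}=\rho_2^{(0)}\lambda_{12}\rho_2^{(0)}$, and $\lambda_{ba}=\lambda_{ab}^{-1}$. By \eqref{eqn:pres_tmr2type}, $\lambda_{13}$ also equals $\rho_1^{(0)}\lambda_{23}\rho_1^{(0)}$, so conjugation by $\langle\rho_1^{(0)},\rho_2^{(0)}\rangle\cong S_3$ simply permutes the indices. Since $s_2s_1=\lambda_{23}\lambda_{13}\rho_2^{(0)}\rho_1^{(0)}$, the welded relation \eqref{eqn:pres_twr} with $i=1$ is equivalent to $[\lambda_{13},\lambda_{23}]=1$. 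Its $S_3$-conjugates then show that $\lambda_{12},\lambda_{13},\lambda_{23}$ commute pairwise. Hence $[s_1\rho_1^{(0)},s_2\rho_2^{(0)}]=[\lambda_{12},\lambda_{23}]=1$, and with $\tau=\rho_1^{(0)}\rho_2^{(0)}$,
\[
(s_1s_2)^3=(\lambda_{12}\lambda_{13}\tau)^3=\lambda_{12}\lambda_{13}\cdot\lambda_{23}\lambda_{12}^{-1}\cdot\lambda_{13}^{-1}\lambda_{23}^{-1}=1 .
\]
So $s_1s_2$ has order $3$ already in $WT_n$, and no action in which it has infinite order can exist. The pure welded twin group is abelian, not non-abelian as you assume. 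Your fallback commutators, including those of conjugates of $s_1\rho_1^{(0)}$ by elements of $\langle s_i,\rho_i^{(0)}\rangle$, are likewise trivial.

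What is needed is a word in $\ker\delta_1$ whose image in this abelian pure part is non-zero. Note that $\delta_1(\lambda_{ab})$ is diagonal, with $s/r_2$ in position $a$ and $r_2/s$ in position $b$. Hence $\delta_1$ kills $\lambda_{13}\lambda_{12}^{-1}\lambda_{23}^{-1}$, and this word equals $(\rho_1^{(0)}s_2)^3$, which is exactly the element the paper uses. The paper only asserts that this element is non-trivial. A rigorous certificate is the homomorphism $M_kWT_n\to\mathbb{Z}^{\binom{n}{2}}\rtimes S_n$ given by $s_i\mapsto(e_{i,i+1},(i\ i+1))$ and $\rho_i^{(\alpha)}\mapsto(0,(i\ i+1))$, where $\pi\cdot e_{ab}=e_{\pi(a)\pi(b)}$ and $e_{ba}=-e_{ab}$. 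This map respects every relation of Definition~\ref{def:MWT}, and it sends $(\rho_1^{(0)}s_2)^3$ to $(e_{13}-e_{12}-e_{23},1)\neq 1$.
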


\begin{proof}
Using matrix computations, we obtain
\[
[\delta_1(\rho_i^{(0)})\delta_1(s_{i+1})]^3=I_n,
\quad 1\leq i \leq n-2.
\]
Notice that the element $(\rho_i^{(0)}s_{i+1})^3$ is non-trivial. Indeed, if
\[
(\rho_i^{(0)}s_{i+1})^3=1,
\]
then we would obtain
\[
\rho_i^{(0)}s_{i+1}\rho_i^{(0)}
=
s_{i+1}\rho_i^{(0)}s_{i+1},
\]
which is not a valid relation in our group. Therefore, the representation $\delta_1$ is unfaithful.
\end{proof}

It would also be interesting to extend the $3$-local representation theory from $UV_n(2)$ (see \cite[Section~4]{NasserOcampo}) to the multi-welded twin setting. This could lead to new families of representations and further insights into the structure of $M_kWT_n$. We now determine all non-trivial complex homogeneous $3$-local complex representation of $M_2WT_n$ for all $n\geq 4$.

\begin{theorem} \label{thm3localM2}
Let $\epsilon: M_2WT_n \longrightarrow \mathrm{GL}_{n+1}(\mathbb{C})$ be a non-trivial homogeneous $3$-local representation. Then, up to equivalence of representations, $\epsilon$ is equivalent to one of the following four representations, denoted by $\epsilon_j$, $1 \leq j \leq 4$, given by
$$
\epsilon_j(s_i)=
\begin{pmatrix}
I_{i-1} & 0 & 0 \\
0 & S^{(j)} & 0 \\
0 & 0 & I_{n-i-1}
\end{pmatrix},$$
$$\ 
\epsilon_j(\rho_i^{(0)})=
\begin{pmatrix}
I_{i-1} & 0 & 0 \\
0 & R_{0}^{(j)} & 0 \\
0 & 0 & I_{n-i-1}
\end{pmatrix}, 
$$
and
\[ 
\epsilon_j(\rho_i^{(1)})=
\begin{pmatrix}
I_{i-1} & 0 & 0 \\
0 & R_{1}^{(j)} & 0 \\
0 & 0 & I_{n-i-1}
\end{pmatrix}
\]
for all $1 \leq i \leq n-1$, where $S^{(j)}$, $R_{0}^{(j)}$, and $R_{1}^{(j)}$ are explicitly determined as follows.
\begin{enumerate}
    \item The matrices of the representation $\epsilon_1$ are given by
    $$S^{(1)}=\begin{pmatrix}
        1 & 0 & 0\\
        0 & 0 & \dfrac{1}{s_8}\\
        0 & s_8 & 0
    \end{pmatrix}, \ R_{0}^{(1)}=\begin{pmatrix}
        1 & 0 & 0\\
        0 & 0 & r_6\\
        0 & \dfrac{1}{r_6} & 0
    \end{pmatrix}, \text{ and }R_{1}^{(1)}=\begin{pmatrix}
         1 & 0 & 0\\
        0 & 0 & p_6\\
        0 & \dfrac{1}{p_6} & 0
    \end{pmatrix},
    $$
    where $s_8,r_6,p_6 \in \mathbb{C}^*$.\vspace{0.1cm}
     \item The matrices of the representation $\epsilon_2$ are given by
    $$
    S^{(2)}=\begin{pmatrix}
        0 & \dfrac{1}{s_4} & 0\\
        s_4 & 0 & 0\\
        0 & 0 & 1
    \end{pmatrix}, \ R_{0}^{(2)}=\begin{pmatrix}
        0 & r_2 & 0\\
        \dfrac{1}{r_2} & 0 & 0\\
        0 & 0 & 1
    \end{pmatrix}, \ \text{ and } R_{1}^{(2)}=\begin{pmatrix}
      0 & p_2 & 0\\
        \dfrac{1}{p_2} & 0 & 0\\
        0 & 0 & 1
    \end{pmatrix},
    $$
    where $s_4,r_2,p_2 \in \mathbb{C}^*$.\vspace{0.1cm}
    \item The matrices of the representation $\epsilon_3$ are given by
    $$
S^{(3)}=R_{0}^{(3)}=R_{1}^{(3)}=\begin{pmatrix}
        1 & 0 & 0\\
        \dfrac{1}{r_6} & -1 & r_6\\
        0 & 0 & 1
    \end{pmatrix},$$
    where $r_6 \in \mathbb{C}^*$.\vspace{0.1cm}

    \item The matrices of the representation $\epsilon_4$ are given by
    $$
S^{(4)}=R_{0}^{(4)}=R_{1}^{(4)}=\begin{pmatrix}
        1 & r_2 & 0\\
        0 & -1 & 0\\
        0 & \dfrac{1}{r_2} & 1
    \end{pmatrix},$$
    where $r_2\in \mathbb{C}^*$.
\end{enumerate}
\end{theorem}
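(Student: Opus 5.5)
The classification will be reduced to a computation on the $3\times 3$ blocks. Since $\epsilon$ is homogeneous $3$-local, there are $3\times 3$ matrices $S$, $R_0$, $R_1$ with
$$
\epsilon(s_i)=I_{i-1}\oplus S\oplus I_{n-i-1},\qquad \epsilon(\rho_i^{(0)})=I_{i-1}\oplus R_0\oplus I_{n-i-1},\qquad \epsilon(\rho_i^{(1)})=I_{i-1}\oplus R_1\oplus I_{n-i-1}.
$$
Write $S=(s_1,\dots,s_9)$, $R_0=(r_1,\dots,r_9)$ and $R_1=(p_1,\dots,p_9)$ row by row. We will then impose the relations of Definition~\ref{def:MWT} with $k=2$.

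One route is to use Proposition~\ref{prop:UW_MkWT}. If a classification of homogeneous $3$-local representations of $UW_n(2)$ is available in \cite{NasserOcampo}, we can start from it and impose the extra relations, as in the proof of Theorem~\ref{thmlocal}. Otherwise we solve the system directly, in the following order.

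\textbf{Step 1: the $\rho^{(0)}$ family.} For $n\ge4$ the commutation relations with $|i-j|=2$ involve matrices overlapping in one coordinate. They force strong vanishing conditions on $R_0$: the off-corner entries linking coordinate $3$ of the block at $i$ with coordinate $1$ of the block at $i+2$ must commute appropriately. Together with $R_0^2=I_3$ and the braid relation $\rho_i^{(0)}\rho_{i+1}^{(0)}\rho_i^{(0)}=\rho_{i+1}^{(0)}\rho_i^{(0)}\rho_{i+1}^{(0)}$, these should give a short list of possible $R_0$, up to diagonal conjugation. The expected forms are:
\begin{itemize}
\item $1\oplus$ an antidiagonal $2\times2$ block;
\item an antidiagonal $2\times2$ block $\oplus\,1$;
\item the two reflection-type matrices with $-1$ in the middle, as in $\epsilon_3$ and $\epsilon_4$.
\end{itemize}
The same analysis applies to $R_1$ and to $S$, since each family separately satisfies the involution and distant-commutation relations. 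Here $S$ is not required to satisfy the braid relation, so its list may be larger a priori.

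\textbf{Step 2: coupling the families.} We impose the MR2-type relation, the Virtual mixed relations I and II with $\alpha=0,\beta=1$, the welded relation for $\alpha=0,1$, and the mixed and virtual commutation relations. The key point is that the shape of $S$ and $R_1$ is dictated by that of $R_0$. For example, the commutation relations with $|i-j|\ge2$ between different families force all three matrices to share the same block pattern, since they are polynomial identities in overlapping entries. Once the types agree, the mixed relations impose relations among the parameters. In the cases $1\oplus(\text{antidiagonal})$ and $(\text{antidiagonal})\oplus1$, the parameters $s_8,r_6,p_6$ (respectively $s_4,r_2,p_2$) should remain free, giving $\epsilon_1$ and $\epsilon_2$. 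In the reflection-type cases, the mixed relations should force $S=R_0=R_1$ with a single parameter, giving $\epsilon_3$ and $\epsilon_4$. We then verify directly that each of $\epsilon_1,\dots,\epsilon_4$ satisfies every relation.

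\textbf{Main obstacle.} The hardest part will be the case analysis of the polynomial system in Step 2, especially when $S$ could a priori be of a different type from $R_0$, for instance a non-reflection involution. Mixed types will be ruled out first via the distant commutation relations, by comparing the $(i+2)$-nd row and column of both products. Only after that will we treat the same-type cases, normalizing by diagonal conjugation, which is the source of the phrase ``up to equivalence'', to eliminate redundant parameters.
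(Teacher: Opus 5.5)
Your plan is essentially the paper's proof. Both write the three $3\times 3$ blocks $S,R_0,R_1$ with unknown entries, impose the relations of Definition~\ref{def:MWT} for $k=2$, and solve the resulting polynomial system: the paper hands it to Mathematica, while you organize it by hand (distance-$2$ commutation, involution and braid first, which leave for $R_0$ only your four shapes or $I_3$, then the MR2-type, mixed and welded relations to tie $S$ and $R_1$ to $R_0$). The one case you omit is $R_0=I_3$, which you should dispose of explicitly: the MR2-type relation then gives $\epsilon(s_i)=\epsilon(s_{i+1})$ and Virtual mixed relation II gives $\epsilon(\rho_i^{(1)})=\epsilon(\rho_{i+1}^{(1)})$, which force $S=R_1=I_3$ and hence the trivial representation.
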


\begin{proof}
Since $\epsilon$ is a homogeneous $3$-local representation of $M_2WT_n$, there exist matrices $S$, $R_0$, and $R_1$ in $\mathrm{GL}_3(\mathbb{C})$ such that
$$
\epsilon(s_i)=
\begin{pmatrix}
I_{i-1} & 0 & 0 \\
0 & S & 0 \\
0 & 0 & I_{n-i-1}
\end{pmatrix},$$
$$
\epsilon(\rho_i^{(0)})=
\begin{pmatrix}
I_{i-1} & 0 & 0 \\
0 & R_1 & 0 \\
0 & 0 & I_{n-i-1}
\end{pmatrix},
$$
and
\[ \epsilon(\sigma_{i,2})=
\begin{pmatrix}
I_{i-1} & 0 & 0 \\
0 & R_2 & 0 \\
0 & 0 & I_{n-i-1}
\end{pmatrix}
\]
for every $1\leq i\leq n-1$. 
The matrices $S$, $R_1$, and $R_2$ are given by
\[
S=
\begin{pmatrix}
s_1&s_2&s_3\\
s_4&s_5&s_6\\
s_7&s_8&s_9
\end{pmatrix},
\quad 
R_1=
\begin{pmatrix}
r_1&r_2&r_3\\
r_4&r_5&r_6\\
r_7&r_8&r_9\\
\end{pmatrix},
\quad
\text{and} 
\quad 
R_2=
\begin{pmatrix}
p_1&p_2&p_3\\
p_4&p_5&p_6\\
p_7&p_8&p_9\\
\end{pmatrix},
\]
where all coefficients belong to $\mathbb{C}$ and each matrix is invertible. Since $\epsilon$ defines a representation of $M_2WT_n$, the defining relations of the group must be preserved under $\epsilon$. Moreover, by the locality assumption, it is sufficient to examine only a reduced collection of relations, as the remaining ones produce equivalent systems of equations. Hence, it suffices to impose the relations. Substituting the above matrices into these relations and comparing the corresponding entries produces a large polynomial system involving the variables $s_j,r_j,p_j$. The computations are lengthy and highly technical, so the resulting system was solved using Wolfram Mathematica. This procedure yields precisely the required solutions. Finally, a direct computation confirms that the obtained matrices satisfy all defining relations of $M_2WT_n$, completing the proof.
\end{proof}

We now state a result regarding the reducibility of these classified representations.

\begin{proposition}

    Every non-trivial homogeneous $3$-local representation $\epsilon: M_2WT_n \longrightarrow \mathrm{GL}_{n+1}(\mathbb{C})$ is reducible.
\end{proposition}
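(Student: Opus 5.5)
By Theorem~\ref{thm3localM2}, every non-trivial homogeneous $3$-local representation $\epsilon$ of $M_2WT_n$ is equivalent to one of $\epsilon_1,\dots,\epsilon_4$. Reducibility is preserved under equivalence, so it suffices to produce, for each $j\in\{1,2,3,4\}$, a proper non-zero subspace of $\mathbb{C}^{n+1}$ invariant under all the matrices $\epsilon_j(s_i)$, $\epsilon_j(\rho_i^{(0)})$ and $\epsilon_j(\rho_i^{(1)})$, $1\le i\le n-1$. Let $e_1,\dots,e_{n+1}$ be the standard basis. Each generator with index $i$ acts on $\operatorname{span}\{e_i,e_{i+1},e_{i+2}\}$ through its $3\times 3$ block and fixes every other basis vector. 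The plan is to read an invariant vector or hyperplane directly off the block structure.

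For $\epsilon_1$, the first row and column of every block are those of the identity. The block with index $i$ therefore fixes $e_i$, and it only permutes and rescales $e_{i+1}$ and $e_{i+2}$. Since $e_1$ never lies among the indices $i+1,i+2$, every generator fixes $e_1$. Hence $\operatorname{span}\{e_1\}$ is invariant and $\epsilon_1$ is reducible.

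For $\epsilon_2$, the same reasoning applies with the roles of the ends reversed: the third row and column of each block are trivial. The block with index $i$ mixes only $e_i$ and $e_{i+1}$, and $i+1\le n$, so $e_{n+1}$ is fixed by every generator. Thus $\operatorname{span}\{e_{n+1}\}$ is invariant.

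For $\epsilon_3$ and $\epsilon_4$, all three families are represented by the same block $M$, so only one family of matrices needs to be checked. For $\epsilon_4$, with $M=\begin{pmatrix}1&r_2&0\\0&-1&0\\0&1/r_2&1\end{pmatrix}$, the columns show that $e_i\mapsto e_i$ and $e_{i+2}\mapsto e_{i+2}$, while $e_{i+1}\mapsto r_2e_i-e_{i+1}+r_2^{-1}e_{i+2}$. I would look for an invariant vector $v=\sum_m a_me_m$ with $a_m=\lambda^m$. The condition $Mv=v$ on the block reduces to $a_{i+1}=r_2a_i+r_2^{-1}a_{i+2}$ after substituting the middle row. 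Solving this recurrence in $\lambda$ gives $\lambda=r_2$, so $v=\sum_m r_2^{m}e_m$ is fixed by every generator.

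For $\epsilon_3$, with $M=\begin{pmatrix}1&0&0\\1/r_6&-1&r_6\\0&0&1\end{pmatrix}$, the block changes only the middle coordinate, to $r_6^{-1}x_i-x_{i+1}+r_6x_{i+2}$. I would instead use a row eigenvector: a linear functional $f=\sum_m c_mx_m$ with $fM=f$ on each block. The condition is $c_{i}+c_{i+1}r_6^{-1}=c_i$ in the first column, which already fails unless $c_{i+1}=0$, and that in turn forces $f=0$. So I switch to invariant vectors. Solving $Mv=v$ on the block gives $2a_{i+1}=r_6^{-1}a_i+r_6a_{i+2}$, and the choice $a_m=r_6^{-m}$ solves it: $r_6^{-1-i}+r_6^{1-i-2}=2r_6^{-i-1}$. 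Hence $v=\sum_m r_6^{-m}e_m$ is fixed by every generator.

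Checking $\epsilon_4$ the same way, the fixed-vector equation from the middle row is $-a_{i+1}+\dots$; more precisely, $Mv=v$ requires $r_2a_{i+1}=0$ in the first row, which is wrong. The correct approach for $\epsilon_4$ is therefore the transpose version: a linear functional $f$ with $f\circ\epsilon(g)=f$ for all generators. Its kernel is an invariant hyperplane. The block condition is $2c_{i+1}=r_2c_i+r_2^{-1}c_{i+2}$, solved by $c_m=r_2^{m}$.

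The main obstacle is exactly this bookkeeping for $\epsilon_3$ and $\epsilon_4$, namely deciding for each case whether a fixed vector or a fixed functional exists and solving the resulting geometric recurrence so that one choice works uniformly for all $i$. In every case the recurrence is satisfied by a geometric progression, and this yields the required invariant subspace.
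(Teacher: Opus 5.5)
Your final argument is correct, but for $\epsilon_3$ and $\epsilon_4$ it takes a more roundabout route than the paper, and the write-up contains false statements that must be removed.

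For $\epsilon_1$ you do exactly what the paper does, using the fixed vector $e_1$. For $\epsilon_2$ the paper uses the invariant functional $x\mapsto x_{n+1}$, i.e.\ the row vector $(0,\dots,0,1)$; your fixed vector $e_{n+1}$ works equally well. For the remaining two cases the paper just repeats the same block-reading. The first column of the $\epsilon_4$-block is $(1,0,0)^{T}$, so $e_1$ is fixed exactly as for $\epsilon_1$. The last row of the $\epsilon_3$-block is $(0,0,1)$, so $x_{n+1}$ is invariant and $\{x_{n+1}=0\}$ is an invariant hyperplane. You instead produce geometric invariants: the fixed vector $v=\sum_m r_6^{-m}e_m$ for $\epsilon_3$ and the invariant functional $f=\sum_m r_2^{m}x_m$ for $\epsilon_4$. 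Both verifications are right.

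Your two dead ends are actually mistakes, and they hide precisely the paper's invariants.
\begin{itemize}
\item In the $\epsilon_3$ paragraph, the conditions $c_{i+1}=0$ for $1\le i\le n-1$ only kill $c_2,\dots,c_n$. Since $c_1$ and $c_{n+1}$ are unconstrained, they do not force $f=0$.
\item For $\epsilon_4$, the condition $r_2a_{i+1}=0$ likewise leaves $a_1,a_{n+1}$ free. So $e_1$ and $e_{n+1}$ are fixed, and there was no need to pass to functionals; you had even written $e_i\mapsto e_i$.
\item The first $\epsilon_4$ paragraph claims $\sum_m r_2^{m}e_m$ is fixed, which is false. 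The recurrence there is wrong, and $\lambda=r_2$ does not even satisfy it. Delete that paragraph rather than correcting it afterwards.
\end{itemize}

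Your route does buy something beyond the paper's. Since $v_{n+1}\neq 0$ and $f(e_1)=r_2\neq 0$, your invariants are complements to the paper's. This gives $\mathbb{C}^{n+1}=\mathbb{C}v\oplus\{x_{n+1}=0\}$ for $\epsilon_3$ and $\mathbb{C}^{n+1}=\mathbb{C}e_1\oplus\ker f$ for $\epsilon_4$. So these representations split off a trivial one-dimensional summand, which is stronger than reducibility. The paper's argument, reading off a trivial column or row of the block, is shorter and uniform across all four cases.
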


\begin{proof}
Every non-trivial homogeneous $3$-local representation  $
\epsilon: M_2WT_n \longrightarrow \mathrm{GL}_{n+1}(\mathbb{C})$
must be one of the representations $\epsilon_j$, for $1\leq j\leq 4$, introduced in Theorem \ref{thm3localM2}. Observe that the column vector $
(1,0,\ldots,0)^T$
is invariant under the right action of the matrices
\[
S^{(1)},\ R_0^{(1)},\ R_1^{(1)},\ S^{(4)},\ R_0^{(4)},\ R_1^{(4)}.
\]
Moreover, the row vector $
(0,\ldots,0,1)$
is invariant under the left action of the matrices
\[
S^{(2)},\ R_0^{(2)},\ R_1^{(2)},\ S^{(3)},\ R_0^{(3)},\ R_1^{(3)}.
\]
Hence, the representations $\epsilon_j$, for $j=1,4$, admit a non-trivial invariant subspace generated by the column vector $(1,0,\ldots,0)^T$, whereas the representations $\epsilon_j$, for $j=2,3$, admit a non-trivial invariant subspace generated by the row vector $(0,\ldots,0,1)$. Therefore, each representation $\epsilon_j$ is reducible, and consequently, $\epsilon$ is reducible as well.
\end{proof}

We now state a result regarding the faithfulness of these classified representations.

\begin{proposition}
The representations $
\epsilon_j: M_2WT_n \longrightarrow \mathrm{GL}_n(\mathbb{C}), \ 1\le j \le 4,$
introduced in Theorem \ref{thm3localM2} are unfaithful.
\end{proposition}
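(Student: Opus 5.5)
The plan is to exhibit, for each $\epsilon_j$, an explicit non-trivial element of $M_2WT_n$ that is sent to $I_{n+1}$. The cases split naturally into the two "equal matrices" representations $\epsilon_3,\epsilon_4$ and the two monomial ones $\epsilon_1,\epsilon_2$.

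\emph{Cases $\epsilon_3,\epsilon_4$.} Here $S^{(j)}=R_0^{(j)}=R_1^{(j)}$, so $\epsilon_j(s_i)=\epsilon_j(\rho_i^{(0)})$ for every $i$. Since $s_i^2=1$ in the group, we get $\epsilon_j(s_i\rho_i^{(0)})=\epsilon_j(s_i)^2=I_{n+1}$. To see that $s_i\rho_i^{(0)}\neq 1$, I would use the abelianization of Theorem~\ref{thm:MWT_properties}(i). The surjection $M_2WT_n\to\mathbb{Z}_2^{3}$ sends $s_i\mapsto e_0$ and $\rho_i^{(0)}\mapsto e_1$, so it sends $s_i\rho_i^{(0)}$ to $e_0+e_1\neq 0$. (The same argument also works with $s_i\rho_i^{(1)}$.) This settles $j=3,4$ cleanly.

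\emph{Cases $\epsilon_1,\epsilon_2$.} All three blocks are monomial: each is a transposition of two coordinates with reciprocal entries, fixing the third coordinate. Consequently each product $s_i\rho_i^{(0)}$, $s_i\rho_i^{(1)}$, $\rho_i^{(0)}\rho_i^{(1)}$ is sent to a diagonal matrix. For instance, in $\epsilon_1$, $\epsilon_1(s_i\rho_i^{(0)})$ is diagonal with entries $1/(s_8r_6)$ and $s_8r_6$ in positions $i+1,i+2$. My first attempt is to imitate the argument used for $\delta_1$. The element $(\rho_i^{(0)}s_{i+1})^3$ acts monomially as a $3$-cycle on three consecutive coordinates, so its image is diagonal, and one computes the scalar as a product of the entries $r_6^{\pm1}, s_8^{\pm1}$. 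If that scalar is not identically $1$, I would instead pick a combination such as $(\rho_i^{(0)}s_{i+1})^3(\rho_i^{(1)}s_{i+1})^{-3}$, or an analogous word with $s$ and $\rho$ interchanged, whose parameters cancel. If no such cancellation is available, there is a robust fallback. Let $a=s_i\rho_i^{(0)}$ and $b=s_{i+1}\rho_{i+1}^{(0)}$. Both have diagonal images, so the commutator $[a,b]$ lies in $\ker\epsilon_j$ for $j=1,2$.

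The main obstacle is proving that the chosen kernel element is non-trivial in $M_2WT_n$. Neither the abelianization nor $S_n$ helps here, because both kill these elements. For $(\rho_i^{(0)}s_{i+1})^3$ I would argue as in the proof for $\delta_1$: its triviality would force the braid-type relation $\rho_i^{(0)}s_{i+1}\rho_i^{(0)}=s_{i+1}\rho_i^{(0)}s_{i+1}$, which is not among the relations of Definition~\ref{def:MWT}. To make this rigorous I would look for a finite quotient in which the image of $(\rho_i^{(0)}s_{i+1})^3$ is non-trivial. One candidate: send all $\rho^{(\alpha)}_i$ to $(i\ i+1)$ and each $s_i$ to an involution chosen so that the MR2-type and welded relations still hold while $\rho^{(0)}_i s_{i+1}$ has order $6$ or $\infty$. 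For example, take $s_i\mapsto (i\ i+1)\cdot c$ with $c$ a central involution in a suitable extension. Alternatively, one could use the description of $M_kWT_n$ as a quotient of $UV_n(k)$ and known normal forms there. The same idea, a quotient separating $s_i$ from $\rho_i^{(0)}$ non-abelianly, is what I would try first for the commutator $[a,b]$.
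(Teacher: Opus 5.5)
Your kernel elements are the paper's. For $\epsilon_3,\epsilon_4$ you use coinciding generator images, and your abelianization check that $s_i\rho_i^{(0)}\neq 1$ is correct; it also makes explicit what the paper leaves implicit. For $\epsilon_1,\epsilon_2$ you use $(\rho_i^{(0)}s_{i+1})^3$. Your hedging about the scalar is unnecessary. The matrix $\epsilon_1(\rho_i^{(0)}s_{i+1})$ permutes $e_{i+1}\to e_{i+2}\to e_{i+3}\to e_{i+1}$ with weights $r_6^{-1}$, $s_8$, $s_8^{-1}r_6$. Their product is $1$, so its cube is $I_{n+1}$, and the same holds for $\epsilon_2$.

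The genuine gap is the step you yourself flag as the main obstacle. You never prove that $(\rho_i^{(0)}s_{i+1})^3\neq 1$ in $M_2WT_n$, and your reason for discarding the abelianization is false. This word has three letters $s_{i+1}$ and three letters $\rho_i^{(0)}$. The homomorphism onto $\mathbb{Z}_2^3$ from Theorem~\ref{thm:MWT_properties}(i), given by $s_j\mapsto e_0$ and $\rho_j^{(\alpha)}\mapsto e_{\alpha+1}$, is the very map you used for $\epsilon_3,\epsilon_4$. It sends the word to $3(e_0+e_1)=e_0+e_1\neq 0$. Equivalently, every defining relation has the same parity of $s$-letters on both sides, so this parity is a homomorphism to $\mathbb{Z}_2$, and it detects the element.

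That single line finishes the proof. You can drop the alternative words, the search for finite quotients, and the commutator fallback $[a,b]$. The abelianization genuinely does kill $[a,b]$, so its non-triviality would need a truly non-abelian argument. Your central-involution candidate in $S_n\times\mathbb{Z}_2$ does satisfy all the relations and would work. However, its $\mathbb{Z}_2$-component is exactly this parity map, and you left it unverified.

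The paper's own justification also needs this repair. It says triviality would force $\rho_i^{(0)}s_{i+1}\rho_i^{(0)}=s_{i+1}\rho_i^{(0)}s_{i+1}$, ``not a valid relation''. That only observes this is not a defining relation. The parity count, one $s$-letter on the left and two on the right, is what makes it rigorous.
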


\begin{proof}
The representations $\epsilon_3$ and $\epsilon_4$ are evidently unfaithful, since distinct generators are sent to the same matrix.

For the representations $\epsilon_1$ and $\epsilon_2$, one verifies that the non-trivial elements
\[
(\rho_i^{(0)}s_{i+1})^3,
\quad 1\leq i \leq n-2,
\]
are mapped to the identity matrix. Hence, these representations are also unfaithful.
\end{proof}

\end{document}